\newtheorem{theorem}{Theorem}[section]
\newtheorem{lemma}[theorem]{Lemma}%[section]
\theoremstyle{remark}
\newtheorem{remark}[theorem]{Remark}%[section]
\theoremstyle{definition}
\newcommand{\bq}{\begin{equation}}
\newcommand{\eq}{\end{equation}}
\newcommand{\beqn}{\begin{eqnarray*}}
\newcommand{\eeqn}{\end{eqnarray*}}
\newcommand{\beq}{\begin{eqnarray}}
\newcommand{\eeq}{\end{eqnarray}}
\newcommand{\rar}{\rightarrow}
\newcommand{\bc}{\begin{centre}}
\newcommand{\ec}{\end{centre}}
\newcommand{\ba}{\begin{array}}
\newcommand{\ea}{\end{array}}
\newcommand{\inp}[2]{\langle{#1},\,{#2} \rangle}
\renewcommand{\Delta}{{\nabla}}
\begin{document}
\title[Brown-Halmos type characterization for the tetrablock]{Brown-Halmos type characterization for the tetrablock}
\author[S. Jain]{Shubham Jain}
\address [S. Jain]{Department of Mathematics, Indian Institute of Technology Guwahati, Guwahati 781039, India}
\email{shubjainiitg@iitg.ac.in, shubhamjain4343@gmail.com}
\author[S. Kumar ]{Surjit Kumar}
\address[S. Kumar]{Department of Mathematics, Indian Institute of Technology Madras, Chennai 600036, India} 
\email{surjit@iitm.ac.in}
\author[M. K. Mal]{Milan Kumar Mal}
\address[M. K. Mal]{Department of Mathematics, Indian Institute of Technology Madras, Chennai 600036, India}
 \email{ma21d018@smail.iitm.ac.in; milanmal1702@gmail.com }
\author[P. Pramanick]{Paramita Pramanick}
\address[P. Pramanick]{Statistics and Mathematics Unit, Indian Statistical Institute Kolkata, Kolkata 700108, India}
\email{paramitapramanick@gmail.com}

\subjclass[2020]{Primary 30H10, 47B35;  Secondary 32A10, 47B32}
\keywords{Hardy space, Toeplitz operators, tetrablock, type-II Cartan domain, Shilov boundary.}
\date{}

\begin{abstract}
In this note, we obtain a Brown–Halmos type characterization for Toeplitz operators on the Hardy space associated with the tetrablock. As an application, we show that the zero operator is the only compact Toeplitz operator.
\end{abstract}

\maketitle
\section{Introduction}
A bounded linear operator defined on the classical Hardy space $H^2(\mathbb D)$ of the unit disc $\mathbb D$ by $T_\phi f:= P_{H^2(\mathbb{D})}(\phi f)$ is called a {\it Toeplitz operator}, where $\phi \in L^\infty(\mathbb T) $, $f \in H^2(\mathbb D)$ and $P_{H^2(\mathbb D)}$ denotes the orthogonal projection of $L^2(\mathbb T)$ onto $H^2(\mathbb D)$. The study of Toeplitz operators on function spaces has a rich history, beginning with the pioneering work of Brown and Halmos in \cite{BH1964}. Since then, extensive explorations have been made in the same direction for domains such as polydisc, unit ball, symmetrized bidisc, Hartogs triangle, and bounded symmetric domains; for instance, see \cite{ BDS2021, DJ1977,  DSZ2012,  GR2024, JP2024, MSS2018, Up1983}. 
 Recently, a study of Toeplitz operators on certain holomorphic function spaces on the proper images of bounded symmetric domains has emerged in \cite{GN2023, GR2024}. One of the main results studied by Brown and Halmos is an algebraic characterization of Toeplitz operators on $H^2(\mathbb D)$, commonly known as the Brown-Halmos characterization (see \cite[Theorem 6]{BH1964}). This characterization has been extended to various domains, including the above mentioned domains.  In \cite{BH1964}, Brown and Halmos also observed that the only compact Toeplitz operator on $H^2(\mathbb D)$ is the zero operator.
 In this paper, we study analogous results in the context of the tetrablock. 

The tetrablock $\mathbb E$ is given by \beqn \mathbb E = \{(z_1, z_2, z_3) \in \mathbb C^3 : 1-z_1z-z_2w+z_3zw \neq 0~\mbox{for all~} z,w \in \overline{\mathbb D}\}.
\eeqn
 The Shilov boundary $S_\mathbb E$ of $\mathbb E$ is the set $ S_\mathbb E=\{(z_1,z_2,z_3)\in \mathbb C^3: z_1=\bar{z}_2 z_3, \; |z_3|=1, \; |z_2|\leq 1\}$ (see \cite[Theorem 7.1]{AWY2007}).
Note that if $z=(z_1,z_2,z_3)\in S_{\mathbb E}$, then $z_2=\bar{z}_1 z_3.$

The tetrablock is first studied extensively in \cite{AWY2007}. Further, tetrablock has been studied in many contexts (see \cite{MT2013, BT2014}). An explicit expression of reproducing kernel for the Bergman space on the tetrablock can be found in \cite[Corollary 2]{MT2013}. 
A notion of Hardy space on a class of domains consisting tetrablock can be found in \cite{GR2024}. The Hardy space $H^2(\mathbb{E})$ on the tetrablock $ \mathbb{E}$ was recently studied in \cite[Section 3.2]{BCJ2024}, following the approach in \cite{MRZ2013}. This construction uses the fact that  $\mathbb{E}$  is the image of a type-II classical Cartan domain in three dimensions under a proper holomorphic map. 
A concrete formula for the reproducing kernel (Szeg\"o kernel) of $H^2(\mathbb E)$ is given in \cite[Equation 9]{BCJ2024}.  

By a commuting $3$-tuple $T=(T_1, T_2, T_3)$ on a complex separable Hilbert space $\mathcal{H}$,  we mean that $T_1, T_2, T_3$ are mutually commuting bounded linear operators on $\mathcal{H}.$ For a subset $\mathcal{S}$ of $\mathcal{H}$, the notation $\bigvee \mathcal{S}$ stands for the closed linear span of $\mathcal{S}$. The main result of this paper establishes a Brown-Halmos type characterization of Toeplitz operators on the Hardy space $H^2(\mathbb E):$
\begin{theorem}\label{Brown-Halmos}
    Let $\boldsymbol{T_z}=(T_{z_1}, T_{z_2}, T_{z_3})$ be the commuting $3$-tuple of multiplication operators on $H^2(\mathbb E)$ by the coordinate functions $z_1,z_2,z_3$. Then a necessary and sufficient condition for a bounded linear operator $T$ on $H^2(\mathbb E)$ to be a Toeplitz operator are the following conditions: $T T_{z_1}=T_{z_2}^* T T_{z_3}$, $T T_{z_2}=T_{z_1}^* T T_{z_3}$, and $T_{z_3}^* T T_{z_3}=T.$
\end{theorem}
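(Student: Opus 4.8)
The plan is to work in the concrete model in which $H^2(\mathbb{E})$ sits as a closed subspace of $L^2(S_{\mathbb{E}},\sigma)$, with $\sigma$ the Hardy measure on the Shilov boundary coming from \cite{BCJ2024}, $P$ the orthogonal projection onto $H^2(\mathbb{E})$, and $T_\phi f=P(\phi f)$ for $\phi\in L^\infty(S_{\mathbb{E}},\sigma)$; since each coordinate is analytic, $T_{z_i}=M_{z_i}|_{H^2(\mathbb{E})}$ and $T_{z_i}^*=PM_{\bar z_i}|_{H^2(\mathbb{E})}$. The organizing observation is that the three stated identities are precisely the compressions to $H^2(\mathbb{E})$ of three identities valid pointwise on $S_{\mathbb{E}}$, namely $|z_3|=1$, $z_1=\bar z_2z_3$, and $z_2=\bar z_1z_3$. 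For necessity I would compute, for $T=T_\phi$ and $f,g\in H^2(\mathbb{E})$, that $\langle T_{z_3}^*T_\phi T_{z_3}f,g\rangle=\int_{S_{\mathbb{E}}}\phi\,|z_3|^2f\bar g\,d\sigma=\langle T_\phi f,g\rangle$, that $\langle T_{z_2}^*T_\phi T_{z_3}f,g\rangle=\int_{S_{\mathbb{E}}}\phi\,(\bar z_2z_3)f\bar g\,d\sigma=\langle T_\phi T_{z_1}f,g\rangle$ using $z_1=\bar z_2z_3$, and the analogue with the roles of $z_1,z_2$ exchanged using $z_2=\bar z_1z_3$.

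For sufficiency, assume $T$ satisfies the three relations. Because $|z_3|=1$ on $S_{\mathbb{E}}$, $U:=M_{z_3}$ is unitary on $L^2(S_{\mathbb{E}},\sigma)$ with $U^*=M_{\bar z_3}$, and I would set $\hat T:=TP$ and $R_n:=U^{*n}\hat TU^{n}$, so that $\|R_n\|\le\|T\|$. Defining $R$ by $\langle Rf,g\rangle:=\mathrm{LIM}_n\langle R_nf,g\rangle$ for a fixed Banach limit produces a bounded operator with $\|R\|\le\|T\|$; since $U^*R_nU=R_{n+1}$, shift-invariance gives $U^*RU=R$, i.e. $R$ commutes with $M_{z_3}$ and $M_{\bar z_3}$. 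The proof then rests on two claims. First, $R$ recovers $T$ after compression: for $f,g\in H^2(\mathbb{E})$ one has $R_nf=\bar z_3^{\,n}T(z_3^nf)$, so $\langle R_nf,g\rangle=\langle T_{z_3}^{*n}TT_{z_3}^nf,g\rangle=\langle Tf,g\rangle$ by iterating the third relation, whence $\langle Rf,g\rangle=\langle Tf,g\rangle$. Second, $R$ is itself a multiplication operator, which is the substantial point.

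The second claim is where the first and second relations enter. Fix $f,g\in H^2(\mathbb{E})$ and put $c_n:=\langle T(z_3^{n+1}f),\,z_2z_3^{n}g\rangle$. Unwinding $R_nf=\bar z_3^{\,n}T(z_3^nf)$ and using $\bar z_1=z_2\bar z_3$ on $S_{\mathbb{E}}$ gives $\langle M_{z_1}R_nf,g\rangle=\langle T(z_3^nf),\,z_2z_3^{\,n-1}g\rangle=c_{n-1}$; on the other side $\langle R_nM_{z_1}f,g\rangle=\langle T(z_1z_3^nf),\,z_3^ng\rangle$, and the first relation applied to $z_3^nf\in H^2(\mathbb{E})$ rewrites $T(z_1z_3^nf)=P\bigl(\bar z_2\,T(z_3^{n+1}f)\bigr)$, so that (as $z_3^ng\in H^2(\mathbb{E})$) $\langle R_nM_{z_1}f,g\rangle=\langle \bar z_2\,T(z_3^{n+1}f),\,z_3^ng\rangle=c_n$. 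Shift-invariance of the Banach limit then yields $\langle RM_{z_1}f,g\rangle=\langle M_{z_1}Rf,g\rangle$ for all $f,g\in H^2(\mathbb{E})$, and the second relation gives the same statement for $M_{z_2}$.

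To conclude, I would promote these identities from $H^2(\mathbb{E})$ to all of $L^2(S_{\mathbb{E}},\sigma)$: since $R$ commutes with every integer power of $U$ and $M_{z_1},M_{z_2}$ commute with $U$, one transports powers of $U$ to one side until an analytic argument remains, and then invokes the density in $L^2(S_{\mathbb{E}},\sigma)$ of $\mathrm{span}\{z_3^kh:k\in\mathbb{Z},\,h\in H^2(\mathbb{E})\}$ — this span contains every monomial $z_1^az_2^bz_3^d$ with $a,b\ge0$ and $d\in\mathbb{Z}$, which on $S_{\mathbb{E}}$ restricts to $\bar z_2^{\,a}z_2^{\,b}z_3^{\,a+d}$ and hence, by Stone–Weierstrass, spans a dense subalgebra of $C(S_{\mathbb{E}})$. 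Thus $R$ commutes with $M_{z_1},M_{z_2},M_{z_3},M_{\bar z_3}$; since $z_1,z_2,z_3,\bar z_3$ separate points of $S_{\mathbb{E}}$ and generate a self-adjoint algebra (using $\bar z_1=z_2\bar z_3$ and $\bar z_2=z_1\bar z_3$), these multipliers generate the maximal abelian algebra $L^\infty(S_{\mathbb{E}},\sigma)$, forcing $R=M_\phi$ for some $\phi\in L^\infty(S_{\mathbb{E}},\sigma)$ with $\|\phi\|_\infty\le\|T\|$. Combined with the first claim, $\langle Tf,g\rangle=\langle M_\phi f,g\rangle=\langle T_\phi f,g\rangle$ for all $f,g\in H^2(\mathbb{E})$, so $T=T_\phi$. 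I expect the main obstacle to be exactly this passage from $H^2(\mathbb{E})$ to $L^2(S_{\mathbb{E}},\sigma)$, keeping one slot analytic while transporting powers of $U$, together with pinning down $\sigma$ and $P$ precisely from the construction in \cite{BCJ2024}; a more computational alternative is to fix the Szeg\"o orthonormal basis, read off from the three relations that the matrix of $T$ is constant along the associated diagonals, and reconstruct $\phi$ from these diagonal values, with boundedness of $T$ guaranteeing $\phi\in L^\infty(S_{\mathbb{E}},\sigma)$.
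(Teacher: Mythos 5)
Your proof is correct, and although it runs the same underlying Brown--Halmos dilation argument as the paper --- conjugate $TP_{H^2(\mathbb E)}$ by powers of multiplication by $z_3$ (unitary on $L^2(S_{\mathbb E})$ since $|z_3|=1$ there), pass to a limit to obtain an operator commuting with $M_{z_1},M_{z_2},M_{z_3}$, identify that operator as $M_{\hat\psi}$ for some $\hat\psi\in L^\infty(S_{\mathbb E})$, and note that its compression to $H^2(\mathbb E)$ is both $T$ and $T_{\hat\psi}$ --- your supporting structure is genuinely different from the paper's. The paper never performs the limit construction on $L^2(S_{\mathbb E})$ itself: it conjugates $T$ by the unitary $\Psi$ of \eqref{psihardyhardy} into an operator $A$ on $H^2_{-}(\mathfrak{R}_{\mathrm{II}})$, proves the dilation lemma there (Lemma \ref{key lemma}), and for that relies on Lemma \ref{lemma 1} (minimality of the normal extension $\boldsymbol{\mathcal{M}_\phi}$, which gives density of the span $V$) and Lemma \ref{lemma 2} (every element of $V$ is pushed into $H^2_{-}(\mathfrak{R}_{\mathrm{II}})$ by a power of $\mathcal{M}_{\phi_3}$), before transporting the resulting operator $X$ back to $L^2(S_{\mathbb E})$ and citing Lemma \ref{multiplication operator}. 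You bypass the Cartan domain entirely: your dense subspace $\mathrm{span}\{z_3^k h: k\in\mathbb Z,\ h\in H^2(\mathbb E)\}$, with density proved by Stone--Weierstrass, replaces the minimal-normal-extension fact; your Banach limit replaces the paper's eventually-constant weak limits; and your maximal-abelian-algebra argument is in effect a proof of the cited Lemma \ref{multiplication operator}. The trade-off is that the paper's detour reuses machinery (the transfer unitaries, the $\sigma$-invariant spaces, Lemma \ref{lemma 2}) needed anyway for the compactness theorem of Section 4, whereas your route proves Theorem \ref{Brown-Halmos} with less overhead and fewer imported facts. Two steps you left as sketches do go through and should be written out: (i) in promoting $RM_{z_i}=M_{z_i}R$ from $H^2(\mathbb E)$-vectors to all of $L^2(S_{\mathbb E})$, take $x=z_3^kh$, $y=z_3^lh'$ with $h,h'\in H^2(\mathbb E)$; since $R$ commutes with $U^{\pm1}$ and $M_{z_i}$ commutes with $U$, one gets $\langle RM_{z_i}x,y\rangle=\langle RM_{z_i}(z_3^{k-l}h),h'\rangle$ when $k\ge l$ and $\langle RM_{z_i}x,y\rangle=\langle RM_{z_i}h,z_3^{l-k}h'\rangle$ when $l>k$ (and likewise for $M_{z_i}R$), so one slot always absorbs the $z_3$-powers while both arguments remain in $H^2(\mathbb E)$, and the identity already proved on $H^2(\mathbb E)\times H^2(\mathbb E)$ applies; (ii) your density claim is valid because polynomials belong to $H^2(\mathbb E)$ and the restricted monomials $\bar z_2^{\,a}z_2^{\,b}z_3^{\,c}$ with $a,b\ge 0$, $c\in\mathbb Z$ form a unital self-adjoint algebra separating the points of $S_{\mathbb E}$, which is homeomorphic to $\overline{\mathbb D}\times\mathbb T$ via $(z_1,z_2,z_3)\mapsto(z_2,z_3)$, so Stone--Weierstrass gives density in $C(S_{\mathbb E})$ and hence in $L^2(S_{\mathbb E})$ for the regular Borel measure $d\Theta_{\mathbb E}$.
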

 Section 2 is devoted to a preliminary discussion of the Hardy space of the tetrablock and associated Toeplitz operators.
In Section 3, we present a proof of the main result. 
We show in Section 4 that any compact Toeplitz operator on $H^2(\mathbb E)$ is the zero operator.

\section{Preliminaries}
In this section, we recall the notion of the Hardy space on tetrablock introduced in \cite[Subsection~3.2]{BCJ2024} (see also \cite[Example~3.11]{GR2024}) and discuss the associated Toeplitz operators. 

\subsection{Hardy space on tetrablock}
Consider the type-II Cartan domain of dimension 3 by $$\mathfrak{R}_{\mathrm{II}} = \left\{ (z_1, z_2, z_3) \in \mathbb{C}^3 : \left\| \begin{pmatrix} z_1 & z_3 \\ z_3 & z_2 \end{pmatrix} \right\| < 1 \right\}.$$ 

We first recall the Hardy space on $\mathfrak{R}_{\mathrm{II}}$ (see \cite[p. 521]{HM1969}). 
Let $K$ denote the group of all linear biholomorphic automorphisms of the domain $\mathfrak{R}_{\mathrm{II}}$.
Let $d\Theta$ be the unique $K$-invariant probability measure supported on the Shilov boundary $S_{\mathfrak{R}_{\mathrm{II}}}$ of $\mathfrak{R}_{\mathrm{II}}.$ Let $L^2(S_{\mathfrak{R}_{\mathrm{II}}})$ be the Hilbert space consisting of square integrable functions $S_{\mathfrak{R}_{\mathrm{II}}}$ with respect to $d\Theta.$   

Note that the $L^2$-inner product is $K$-invariant, that is, $\inp{f\circ k}{g\circ k}_{L^2(S_{\mathfrak{R}_{\mathrm{II}}})}=\inp{f}{g}_{L^2(S_{\mathfrak{R}_{\mathrm{II}}})},$ for all $k\in K$.
%for any $\delta \in  K$, $\inp{f\circ \delta}{g\circ \delta}_{L^2}=\inp{f}{g}_{L^2}.$ 

The Hardy space $H^2(\mathfrak{R}_{\mathrm{II}})$ is given by $$\left\{f\in \mathrm{Hol}(\mathfrak{R}_{\mathrm{II}}):\|f\|_{H^2(\mathfrak{R}_{\mathrm{II}})}=\sup_{0<r<1} \left( \int_{S_{\mathfrak{R}_{\mathrm{II}}}} |f(rz)|^2 d\Theta(z)\right)^{\frac{1}{2}} <\infty\right \}.$$
 For every function $f\in H^2(\mathfrak{R}_{\mathrm{II}})$, its radial limit $\tilde{f}\in L^2(S_{\mathfrak{R}_{\mathrm{II}}})$ and $\|f\|_{H^2(\mathfrak{R}_{\mathrm{II}})}=\|\tilde{f}\|_{L^2(S_{\mathfrak{R}_{\mathrm{II}}})}$ (see \cite[Theorem 6]{HM1969}). Thus, there is an isometric embedding of $H^2(\mathfrak{R}_{\mathrm{II}})$ into $L^2(S_{\mathfrak{R}_{\mathrm{II}}})$. In this note, we do not distinguish between these two identifications.

Let $\sigma : \mathbb C^3 \rar \mathbb C^3$ be the involution given by $$\sigma(z_1, z_2, z_3)=(z_1, z_2, -z_3), \quad (z_1, z_2, z_3) \in \mathbb C^3.$$ 
Since $\big[\begin{smallmatrix} 1 & 0 \\ 0 & -1\end{smallmatrix}\big]\big[\begin{smallmatrix} z_{1} & z_3 \\ z_3 & z_{2}\end{smallmatrix}\big]\big[\begin{smallmatrix} 1 & 0 \\ 0 & -1\end{smallmatrix}\big]=\big[\begin{smallmatrix} z_{1} & -z_3 \\ -z_3 & z_{2}\end{smallmatrix}\big],$ the domain $\mathfrak R_{II}$ is $\sigma$-invariant.
% \textcolor{blue}{Clearly, $J_\sigma =-1.$}
Let $\phi$ be the map given by $\phi=(\phi_1, \phi_2, \phi_3),$ where 
\beq \label{phi definition}
\phi_1(z)=z_1,\; \phi_2(z)=z_2,\; \phi_3(z)=z_1z_2-z^2_3, \quad z=(z_1, z_2, z_3) \in \mathbb C^3.
\eeq
Note that $\phi$ is a proper map of multiplicity $2$ from $\mathfrak R_{II}$ onto $\mathbb E$ (see \cite[Theorem 5.1]{R1982}). Moreover, $\phi$ is $\sigma$-invariant, that is, $\phi \circ \sigma=\phi.$

Let $L^2_{-}(S_{\mathfrak{R}_{\mathrm{II}}})= \left\{f \in L^2(S_{\mathfrak{R}_{\mathrm{II}}}) : f =-f\circ\sigma \right\}.$ Then $L^2_{-}(S_{\mathfrak{R}_{\mathrm{II}}})$ is a closed subspace of $L^2(S_{\mathfrak{R}_{\mathrm{II}}}).$ 
Similarly, $H^2_{-}(\mathfrak{R}_{II})=\left\{ f \in H^2(\mathfrak{R}_{II}) : f =-f \circ \sigma\right\}$ is a closed subspace of $H^2(\mathfrak{R}_{II}).$

The Hardy space $H^2(\mathbb E)$ of the tetrablock is defined as
$$H^2(\mathbb E)=\left\{f \in \mathrm{Hol}(\mathbb E) : \|\mathcal J_{\phi} f \circ \phi\|_{H^2(\mathfrak{R}_{II})} < \infty\right\},$$ where $\mathcal{J}_\phi$ is the determinant of the Jacobian matrix of the proper map $\phi.$ We fix the notation $\boldsymbol{T_z}$ for the tuple $(T_{z_1}, T_{z_2}, T_{z_3})$ of operators of multiplications by the coordinate functions $z_1, \; z_2, \; z_3$ on $H^2(\mathbb E).$

The map $\Psi:H^2(\mathbb E) \rar H^2_{-}(\mathfrak{R}_{II})$ defined as 
\beq \label{psihardyhardy}
\Psi(f)=\mathcal{J}_{\phi}f \circ \phi, \quad f \in H^2(\mathbb E),
\eeq 
is a unitary map (see \cite[Theorem~1.3]{BCJ2024}, \cite[Lemma~3.2]{GR2024}). 

Let $S_\mathbb E$ denote the Shilov boundary of the tetrablock $\mathbb E.$ Then $\phi(S_{\mathfrak{R}_{\mathrm{II}}})=S_{\mathbb E}.$ We consider $d\Theta_{\mathbb E}$ as the measure supported on $S_{\mathbb E}$ given by the formula \beq \label{eq 6} \int_{S_\mathbb E} f d\Theta_{\mathbb E}=\frac{1}{C}\int_{S_{\mathfrak{R}_{\mathrm{II}}}} f \circ \phi  |\mathcal{J}_{\phi}|^2 d\Theta,
 \eeq
 where $C$ is the constant such that $\|1\|_{L^2(d\Theta_{\mathbb E})}=1.$

 Let $L^2(S_{\mathbb E})$ be the set of all square integrable functions on $S_\mathbb E$ with respect to the measure $d\Theta_{\mathbb E}.$ By \cite[Lemma 3.15]{GR2024}, $H^2(\mathbb E)$ is isometrically embedded in $L^2(S_{\mathbb E}),$ that is, each $f \in H^2(\mathbb E)$ has boundary value in $L^2(S_{\mathbb E}).$ 

Consider the map
\beq \label{eq 12}
\tilde{\Psi} : L^2(S_\mathbb E) \rar L_{-}^2(S_{\mathfrak{R}_{\mathrm{II}}}) \mbox{ defined by } \tilde{\Psi}(f)=\mathcal{J}_{\phi} f \circ \phi, \quad f \in L^2(S_\mathbb E).
\eeq

Clearly, $\tilde{\Psi}$ is well defined and an isometry. Since $\mathcal J_{\phi}$ vanishes only on a set of measure zero (with respect to the measure $d\Theta$), $f_1=\mathcal{J}_{\phi}^{-1}\cdot f$ is $\sigma$-invariant measurable function with respect to $d\Theta$ for all $f \in L^2_{-}(S_{\mathfrak{R}_{\mathrm{II}}}).$ 

Thus by \cite[Remark 2.2(3)]{GN2023}, there exists a bounded measurable function $\hat{f}$ on $S_\mathbb E$ such that $f_1=\hat{f}\circ \phi.$ Therefore, $f=\mathcal{J}_\phi \hat{f}\circ \phi$ yields that $\tilde\Psi$ is unitary (see \cite[Lemma 3.14]{GR2024}).

\subsection{Toeplitz operator on $H^2(\mathbb E)$}
Let $L^\infty(S_{\mathfrak{R}_{\mathrm{II}}})$ be the space of all essentially bounded measurable functions with respect to the measure $d\Theta.$ 
The set of all $\sigma$-invariant functions in $ L^\infty(S_{\mathfrak{R}_{\mathrm{II}}})$ is denoted by $L_{+}^\infty(S_{\mathfrak{R}_{\mathrm{II}}}).$
Let $L^\infty(S_{\mathbb E})$ be the space of all essentially bounded measurable functions on $S_\mathbb E$ with respect to the measure $d\Theta_\mathbb E.$ 
Then the space $L^\infty(S_{\mathbb E})$ is isometrically $*$-isomorphic to $L_{+}^\infty(S_{\mathfrak{R}_{\mathrm{II}}})$ via the map $\hat{u} \mapsto \hat{u}\circ \phi$ (see \cite[Subsection 3.3]{GR2024}).

The Toeplitz operator $\mathcal{T}_u$ on $H^2_{-}(\mathfrak{R}_{\mathrm{II}})$ with the symbol $u\in L_{+}^\infty(S_{\mathfrak{R}_{\mathrm{II}}})$ is defined as 
\beqn (\mathcal{T}_uf)(z)=P_{H^2_{-}(\mathfrak{R}_{\mathrm{II}})}(uf), \quad f \in H^2_{-}(\mathfrak{R}_{II}),\eeqn 
where $P_{H^2_{-}(\mathfrak{R}_{\mathrm{II}})}$ is the orthogonal projection of $L_{-}^2(S_{\mathfrak{R}_{\mathrm{II}}})$ onto $H_{-}^2(\mathfrak{R}_{\mathrm{II}}).$ 
Since $uf\in L_{-}^2(S_{\mathfrak{R}_{\mathrm{II}}})$ for any $f\in  H_{-}^2(\mathfrak{R}_{\mathrm{II}})$ and $u\in L_{+}^\infty(S_{\mathfrak{R}_{\mathrm{II}}}),$  the operator $\mathcal{T}_u$ is well defined. 
If $P_{H^2(\mathbb E)}$ denote the orthogonal projection of $L^2(S_\mathbb E)$ onto $H^2(\mathbb E)$ then the Toeplitz operator $T_{\hat{u}}$ on $H^2(\mathbb E)$ for $\hat{u}\in L^\infty(S_{\mathbb E})$ is given by 
\beqn (T_{\hat{u}}\hat{f})(z)=P_{H^2(\mathbb E)}(\hat{u}\hat{f}), \quad \hat{f} \in H^2(\mathbb E).\eeqn
 
Moreover, the following diagram commutes (see \cite[Lemma 4.2]{GR2024}).
$$\begin{tikzcd}
L^2(S_{\mathbb E})\arrow{r}{\tilde{\Psi}} \arrow[swap]{d}{P_{H^2(\mathbb E)}} &  L_{-}^2(S_{\mathfrak{R}_{\mathrm{II}}}) \arrow{d}{P_{H^2_{-}(\mathfrak{R}_{\mathrm{II}})}} \\
 H^2(\mathbb E) \arrow{r}{\Psi} &H^2_{-}(\mathfrak{R}_{\mathrm{II}})
\end{tikzcd}$$
That is, $P_{H^2_{-}(\mathfrak{R}_{\mathrm{II}})}\tilde{\Psi}=\Psi P_{H^2(\mathbb E)}.$ This yields \beq \label{toeplitzrelation} \mathcal{T}_{\hat{u}\circ \phi}\Psi=\Psi T_{\hat{u}},\; \hat{u} \in L^{\infty}(S_{\mathbb E}).\eeq

\section{Proof of the main theorem }
This section is devoted to the proof of Theorem \ref{Brown-Halmos}. 
To that end, we first establish a few auxiliary lemmas, starting with the following discussion. 

For $i=1,2,3$, let $N_{\phi_i}$
be the multiplication operator on $L^2(S_{\mathfrak{R}_{\mathrm{II}}})$ by the function $\phi_i.$ Then $\boldsymbol{N_{\phi}}=(N_{\phi_1}, N_{\phi_2}, N_{\phi_3})$ is a commuting $3$-tuple of normal operators on $L^2(S_{\mathfrak{R}_{\mathrm{II}}}).$ 
Note that for any $f\in L^2_{-}(S_{\mathfrak{R}_{\mathrm{II}}}),$ $((\phi_i  f) \circ \sigma)(z)= \phi_i(\sigma(z)) f(\sigma(z))=-\phi_i(z)f(z)$ for $i=1,2,3.$  Thus, $\phi_{i} f\in L_{-}^2(S_{\mathfrak{R}_{\mathrm{II}}}).$ Similarly, $\bar{\phi_i} f\in L_{-}^2(S_{\mathfrak{R}_{\mathrm{II}}})$ for any $f\in L^2_{-}(S_{\mathfrak{R}_{\mathrm{II}}})$ and $i=1,2,3.$ Subsequently, $L_{-}^2(S_{\mathfrak{R}_{\mathrm{II}}})$ is a reducing subspace for $\boldsymbol{N_{\phi}}.$  Set $\mathcal{M}_{\phi_i}=N_{\phi_i}|_{L^2_{-}(S_{\mathfrak{R}_{\mathrm{II}}})}$ for $i=1, 2, 3.$ Therefore, $\boldsymbol{\mathcal{M}_\phi}=(\mathcal{M}_{\phi_1}, \mathcal{M}_{\phi_2}, \mathcal{M}_{\phi_3})$ is a commuting normal $3$-tuple on $L^2_{-}(S_{\mathfrak{R}_{\mathrm{II}}}).$

Let $\boldsymbol{M_z}=(M_{z_1}, M_{z_2}, M_{z_3})$ be the tuple of multiplication operators by the coordinate functions $z_1,z_2, z_3$ on $L^2(S_\mathbb E).$ Recall that if $z=(z_1,z_2,z_3)\in S_{\mathbb E}$, then $|z_3|=1, \; z_1=\bar{z}_2 z_3$ and $z_2=\bar{z}_1 z_3$ and hence $\boldsymbol{M_z}$ satisfies the following relations:
\beq \label{M-relations} 
M_{z_3}^* M_{z_3}=M_{z_3} M_{z_3}^*=I,\; M_{z_1}=M_{z_2}^* M_{z_3}, \mbox{ and } M_{z_2}=M_{z_1}^* M_{z_3}.
\eeq

It follows from \eqref{eq 12} that $\tilde{\Psi} M_{z_i}= \mathcal{M}_{\phi_i} \tilde{\Psi}$ for $i=1,2,3,$ that is, $\boldsymbol{M_z}$ is unitarily equivalent to $\boldsymbol{\mathcal{M}_\phi}.$ This with \eqref{M-relations} yields that 
\beq \label{relation mathcal M}
\mathcal{M}_{\phi_3}^* \mathcal{M}_{\phi_3}=I, \mathcal{M}_{\phi_1}=\mathcal{M}_{\phi_2}^* \mathcal{M}_{\phi_3} \mbox{ and } \mathcal{M}_{\phi_2}=\mathcal{M}_{\phi_1}^* \mathcal{M}_{\phi_3}.
\eeq

Let $\boldsymbol{\mathcal{T}_\phi} = (\mathcal{T}_{\phi_1}, \mathcal{T}_{\phi_2}, \mathcal{T}_{\phi_3})$, where each $\mathcal{T}_{\phi_i}$ is the Toeplitz operator on $H^2_{-}(\mathfrak{R}_{\mathrm{II}})$ with symbol $\phi_i$, for $i = 1, 2, 3$, as defined in~\eqref{phi definition}. The preceding discussion leads to the following result.

\begin{lemma} \label{lemma 1}
    $\boldsymbol{\mathcal{T}_\phi}$ is a commuting subnormal $3$-tuple on $H^2_{-}({\mathfrak{R}_{\mathrm{II}}})$ with minimal normal extension $\boldsymbol{\mathcal{M}_\phi}.$ Moreover, $\boldsymbol{\mathcal{T}_\phi}$ satisfies the following: $\mathcal{T}_{\phi_1}=\mathcal{T}_{\phi_2}^* \mathcal{T}_{\phi_3}$, $\mathcal{T}_{\phi_2}=\mathcal{T}_{\phi_1}^* \mathcal{T}_{\phi_3},$ and $ \mathcal{T}_{\phi_3}^* \mathcal{T}_{\phi_3}=I.$
\end{lemma}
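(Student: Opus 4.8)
The plan is to exhibit $\boldsymbol{\mathcal{M}_\phi}$ directly as the minimal normal extension and to read off the three operator identities from the already-established relations \eqref{relation mathcal M}. Everything rests on one structural observation: since each $\phi_i$ in \eqref{phi definition} is a $\sigma$-invariant holomorphic polynomial, the subspace $H^2_{-}(\mathfrak{R}_{II})$ is invariant under every $\mathcal{M}_{\phi_i}$. Indeed, for $f \in H^2_{-}(\mathfrak{R}_{II})$ the product $\phi_i f$ is again holomorphic and lies in $H^2(\mathfrak{R}_{II})$ (as $\phi_i$ is bounded on the compact set $S_{\mathfrak{R}_{\mathrm{II}}}$), while $(\phi_i f)\circ\sigma = (\phi_i\circ\sigma)(f\circ\sigma) = -\phi_i f$ shows it stays $\sigma$-antiinvariant. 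Consequently $\mathcal{T}_{\phi_i} = P_{H^2_{-}(\mathfrak{R}_{\mathrm{II}})}\mathcal{M}_{\phi_i}|_{H^2_{-}(\mathfrak{R}_{II})} = \mathcal{M}_{\phi_i}|_{H^2_{-}(\mathfrak{R}_{II})}$.

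Given this, subnormality and commutativity are immediate: $\boldsymbol{\mathcal{M}_\phi}$ is a commuting, doubly commuting normal tuple on $L^2_{-}(S_{\mathfrak{R}_{\mathrm{II}}})$ (each $\mathcal{M}_{\phi_i}$ is the restriction to the reducing subspace $L^2_{-}(S_{\mathfrak{R}_{\mathrm{II}}})$ of a multiplication operator), it leaves the common invariant subspace $H^2_{-}(\mathfrak{R}_{II})$ invariant, and its restriction there is exactly $\boldsymbol{\mathcal{T}_\phi}$; hence $\boldsymbol{\mathcal{T}_\phi}$ is a commuting subnormal tuple with normal extension $\boldsymbol{\mathcal{M}_\phi}$. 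The three identities then follow by compressing \eqref{relation mathcal M}: using $\mathcal{T}_{\phi_i}^* = P_{H^2_{-}(\mathfrak{R}_{\mathrm{II}})}\mathcal{M}_{\phi_i}^*|_{H^2_{-}(\mathfrak{R}_{II})}$, for $h \in H^2_{-}(\mathfrak{R}_{II})$ I would compute $\mathcal{T}_{\phi_2}^*\mathcal{T}_{\phi_3}h = P_{H^2_{-}(\mathfrak{R}_{\mathrm{II}})}\mathcal{M}_{\phi_2}^*\mathcal{M}_{\phi_3}h = P_{H^2_{-}(\mathfrak{R}_{\mathrm{II}})}\mathcal{M}_{\phi_1}h = \mathcal{M}_{\phi_1}h = \mathcal{T}_{\phi_1}h$, invoking $\mathcal{M}_{\phi_1}=\mathcal{M}_{\phi_2}^*\mathcal{M}_{\phi_3}$ and invariance; the remaining two identities come identically from $\mathcal{M}_{\phi_2}=\mathcal{M}_{\phi_1}^*\mathcal{M}_{\phi_3}$ and $\mathcal{M}_{\phi_3}^*\mathcal{M}_{\phi_3}=I$.

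The substantive step — and the one I expect to be the main obstacle — is minimality, i.e.\ that $L^2_{-}(S_{\mathfrak{R}_{\mathrm{II}}})$ is the smallest reducing subspace for $\boldsymbol{\mathcal{M}_\phi}$ containing $H^2_{-}(\mathfrak{R}_{II})$. Since the $\mathcal{M}_{\phi_i}$ doubly commute and map $H^2_{-}(\mathfrak{R}_{II})$ into itself, this reducing subspace equals $\bigvee\{\mathcal{M}_{\phi_1}^{*k_1}\mathcal{M}_{\phi_2}^{*k_2}\mathcal{M}_{\phi_3}^{*k_3}h : k_i \in \N,\ h \in H^2_{-}(\mathfrak{R}_{II})\}$, so I must show this closed span is all of $L^2_{-}(S_{\mathfrak{R}_{\mathrm{II}}})$. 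I would transport the problem to $L^2(S_\mathbb{E})$ through the unitary $\tilde\Psi$ of \eqref{eq 12}, which satisfies $\tilde\Psi M_{z_i} = \mathcal{M}_{\phi_i}\tilde\Psi$ (hence $\tilde\Psi M_{z_i}^* = \mathcal{M}_{\phi_i}^*\tilde\Psi$) and carries $H^2(\mathbb{E})$ onto $H^2_{-}(\mathfrak{R}_{II})$; minimality is then equivalent to $\bigvee\{M_{z_1}^{*k_1}M_{z_2}^{*k_2}M_{z_3}^{*k_3}h : k_i\in\N,\ h\in H^2(\mathbb{E})\} = L^2(S_\mathbb{E})$.

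On $S_\mathbb{E}$ this becomes a concrete computation. The coordinate monomials $z_1^{a}z_2^{b}z_3^{c}$ with $a,b,c\in\N$ lie in $H^2(\mathbb{E})$, and the boundary relations \eqref{M-relations}, equivalently $\bar z_1 = z_2\bar z_3$, $\bar z_2 = z_1\bar z_3$, $\bar z_3 = z_3^{-1}$ on $S_\mathbb{E}$, give $M_{z_1}^{*k_1}M_{z_2}^{*k_2}M_{z_3}^{*k_3}(z_1^{m_1}z_2^{m_2}z_3^{m_3}) = z_1^{m_1+k_2}z_2^{m_2+k_1}z_3^{m_3-(k_1+k_2+k_3)}$. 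Letting the exponents range, the closed span contains every monomial $z_1^{a}z_2^{b}z_3^{c}$ with $a,b\in\N$ and $c\in\mathbb{Z}$. These span the $*$-subalgebra of $C(S_\mathbb{E})$ generated by $z_1,z_2,z_3$ and $z_3^{-1}=\bar z_3$; it contains the constants, is closed under conjugation (by $\overline{z_1^a z_2^b z_3^c}=z_1^b z_2^a z_3^{-a-b-c}$), and separates the points of $S_\mathbb{E}$. Stone–Weierstrass then makes it dense in $C(S_\mathbb{E})$, hence in $L^2(S_\mathbb{E})$, which gives minimality and completes the proof. The only points requiring care are the equivalence between minimality of a normal extension and the spanning condition for a doubly commuting normal tuple, and the verification that the coordinate monomials already suffice, so that full density of polynomials in $H^2(\mathbb{E})$ need not be invoked.
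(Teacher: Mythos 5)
Your proof is correct, and it departs from the paper's at exactly one point: the justification of minimality. The subnormality claim and the three operator identities are handled essentially as in the paper — the paper proves $\mathcal{T}_{\phi_1}=\mathcal{T}_{\phi_2}^*\mathcal{T}_{\phi_3}$ by pairing against $g\in H^2_{-}(\mathfrak{R}_{\mathrm{II}})$, which is just the sesquilinear-form version of your compression computation, and both arguments use the same characterization of the minimal normal extension as the closed span $\bigvee\{\mathcal{M}_{\phi_1}^{*\alpha_1}\mathcal{M}_{\phi_2}^{*\alpha_2}\mathcal{M}_{\phi_3}^{*\alpha_3}h\}$ together with the same transport through the unitary $\tilde{\Psi}$ to $L^2(S_{\mathbb E})$. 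Where the paper then finishes by contradiction, invoking without proof the fact that $\boldsymbol{M_z}$ is the minimal normal extension of $\boldsymbol{T_z}$ (a statement imported from the setting of \cite{GR2024}), you prove that density outright: the computation $M_{z_1}^{*k_1}M_{z_2}^{*k_2}M_{z_3}^{*k_3}(z_1^{m_1}z_2^{m_2}z_3^{m_3})=z_1^{m_1+k_2}z_2^{m_2+k_1}z_3^{m_3-(k_1+k_2+k_3)}$, valid on $S_{\mathbb E}$ because $\bar z_1=z_2\bar z_3$, $\bar z_2=z_1\bar z_3$ and $|z_3|=1$ there, shows the span contains every monomial $z_1^{a}z_2^{b}z_3^{c}$ with $a,b\in\mathbb Z_+$, $c\in\mathbb Z$; these form a unital $*$-subalgebra of $C(S_{\mathbb E})$ separating the points of the compact set $S_{\mathbb E}$, so Stone--Weierstrass and the regularity of the finite Borel measure $d\Theta_{\mathbb E}$ give density in $L^2(S_{\mathbb E})$. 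What your route buys is self-containedness: it proves, rather than cites, the minimality of $\boldsymbol{M_z}$ as normal extension of $\boldsymbol{T_z}$, so Lemma \ref{lemma 1} no longer rests on an external result. What it requires in exchange are precisely the two small verifications you correctly isolate, namely that polynomials belong to $H^2(\mathbb E)$ with the expected boundary values, and that for a commuting (hence, by Fuglede--Putnam, doubly commuting) normal tuple the smallest reducing subspace containing an invariant subspace $H$ is $\bigvee\{\mathcal{M}_{\phi_1}^{*\alpha_1}\mathcal{M}_{\phi_2}^{*\alpha_2}\mathcal{M}_{\phi_3}^{*\alpha_3}h : h\in H\}$ — a fact the paper's proof also uses, implicitly, at the same step.
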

\begin{proof}
Since $H^2_{-}(\mathfrak{R}_{\mathrm{II}})$ is a joint invariant subspace of $L^2_{-}(S_{\mathfrak{R}_{\mathrm{II}}})$ for $\boldsymbol{\mathcal{M}_{\phi}},$ $\boldsymbol{\mathcal{T}_\phi}$ 
is a subnormal tuple.  We now show that $\boldsymbol{\mathcal{M}_\phi}$ is the minimal normal extension of $\boldsymbol{\mathcal{T}_\phi}.$ For this, we need to show that $$L_{-}^2(S_{\mathfrak{R}_{\mathrm{II}}})=\bigvee \{\mathcal{M}_{\phi_1}^{*\alpha_1}\mathcal{M}_{\phi_2}^{*\alpha_2}\mathcal{M}_{\phi_3}^{*\alpha_3}h: h\in H_{-}^2(\mathfrak{R}_{\mathrm{II}}), \alpha_i\in \mathbb Z_{+}, i=1, 2, 3\}.$$
If not, then $\bigvee \{\mathcal{M}_{\phi_1}^{*\alpha_1}\mathcal{M}_{\phi_2}^{*\alpha_2}\mathcal{M}_{\phi_3}^{*\alpha_3}h: h\in H_{-}^2(\mathfrak{R}_{\mathrm{II}}), \alpha_i\in \mathbb Z_{+}, i=1, 2, 3\}$ is a proper subspace of $L_{-}^2(S_{\mathfrak{R}_{\mathrm{II}}})$, reducing under $\boldsymbol{\mathcal{M}_{\phi}}.$
Since  $\boldsymbol{\mathcal{M}_{\phi}}$ is unitarily equivalent to $\boldsymbol{M_z},$ the closed subspace $\bigvee\{M_{z_1}^{*\alpha_1}M_{z_2}^{*\alpha_2}M_{z_3}^{*\alpha_3}\hat{h}: \hat{h}\in H^2(\mathbb E)\;, \alpha_i \in \mathbb Z_+, i=1,2,3\}$ is a proper reducing subspace of $L^2(S_\mathbb E)$ under $\boldsymbol{M_z}.$ This contradicts the fact that $\boldsymbol{M_z}$ is the minimal normal extension of $\boldsymbol{T_z}.$ Thus $\boldsymbol{\mathcal{M}_\phi}$ is the minimal normal extension of $\boldsymbol{\mathcal{T}_\phi}.$
 
For any $h,g\in H^2_{-}(\mathfrak{R}_{\mathrm{II}}),$
\beqn \inp{\mathcal{T}_{\phi_1}h}{g}_{H^2_{-}(\mathfrak{R}_{\mathrm{II}})} &=& \inp{\mathcal{M}_{\phi_1} h}{g}_{L^2_{-}(S_{\mathfrak{R}_{\mathrm{II}}})}\\
&\overset{\eqref{relation mathcal M}}{=}& \inp{\mathcal{M}_{\phi_2}^* \mathcal{M}_{\phi_3}h}{g}_{L^2_{-}(S_{\mathfrak{R}_{\mathrm{II}}})}\\
%&=& \inp{\mathcal{M}_{\phi_3}h}{\mathcal{M}_{\phi_2}g}_{L^2_{-}(S_{\mathfrak{R}_{\mathrm{II}}})}\\
%&=& \inp{\mathcal{T}_{\phi_3}h}{\mathcal{T}_{\phi_2}g}_{H^2_{-}(\mathfrak{R}_{\mathrm{II}})}\\
&=& \inp{\mathcal{T}_{\phi_2}^* \mathcal{T}_{\phi_3}h}{g}_{H^2_{-}(\mathfrak{R}_{\mathrm{II}})}.
\eeqn
Similarly, one can verify that $\mathcal{T}_{\phi_2}=\mathcal{T}_{\phi_1}^* \mathcal{T}_{\phi_3}$ and $ \mathcal{T}_{\phi_3}^* \mathcal{T}_{\phi_3}=I.$ This completes the proof.
\end{proof}

\begin{remark}
   The commuting tuple $\boldsymbol{T_z}=(T_{z_1}, T_{z_2}, T_{z_3})$ defined on $H^2(\mathbb E)$ is unitarily equivalent to the tuple $\boldsymbol{\mathcal{T}_\phi} = (\mathcal{T}_{\phi_1}, \mathcal{T}_{\phi_2}, \mathcal{T}_{\phi_3})$ on $H^2_{-}({\mathfrak{R}_{\mathrm{II}}})$. By Lemma \ref{lemma 1}, we have $T_{z_1}=T_{z_2}^* T_{z_3},\; T_{z_2}=T_{z_1}^* T_{z_3},$ and $T_{z_3}^* T_{z_3}=I.$
\end{remark}

\begin{lemma}\label{lemma 2}
    For any $f\in \mathrm{span}\{\mathcal{M}_{\phi_1}^{*\alpha_1}\mathcal{M}_{\phi_2}^{*\alpha_2}\mathcal{M}_{\phi_3}^{*\alpha_3}h: h\in H_{-}^2(\mathfrak{R}_{\mathrm{II}}), \alpha_i\in \mathbb Z_{+}, i=1, 2, 3\}$, there is an integer $r\in \mathbb Z_{+}$ such that $\mathcal{M}_{\phi_3}^r f\in H_{-}^2(\mathfrak{R}_{\mathrm{II}}).$ 
\end{lemma}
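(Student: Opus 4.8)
The plan is to reduce a single generator $\mathcal{M}_{\phi_1}^{*\alpha_1}\mathcal{M}_{\phi_2}^{*\alpha_2}\mathcal{M}_{\phi_3}^{*\alpha_3}h$ to a purely ``analytic'' expression by converting every adjoint into a non-adjoint multiplication operator, at the cost of accumulating powers of $\mathcal{M}_{\phi_3}^*$, and then to cancel those accumulated powers against a suitable power of $\mathcal{M}_{\phi_3}$. First I would record two structural facts. Since $\mathcal{M}_{\phi_3}$ is the restriction of a normal multiplication operator and satisfies $\mathcal{M}_{\phi_3}^*\mathcal{M}_{\phi_3}=I$ by \eqref{relation mathcal M}, it is in fact unitary, so $\mathcal{M}_{\phi_3}\mathcal{M}_{\phi_3}^*=I$ as well; equivalently, $\mathcal{M}_{\phi_3}$ is unitarily equivalent to $M_{z_3}$, which is unitary by \eqref{M-relations}. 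Moreover, $\mathcal{M}_{\phi_1},\mathcal{M}_{\phi_2},\mathcal{M}_{\phi_3}$ together with their adjoints are all multiplication operators on the same $L^2$-space, hence they mutually commute.

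Next I would take adjoints in the two identities $\mathcal{M}_{\phi_1}=\mathcal{M}_{\phi_2}^*\mathcal{M}_{\phi_3}$ and $\mathcal{M}_{\phi_2}=\mathcal{M}_{\phi_1}^*\mathcal{M}_{\phi_3}$ from \eqref{relation mathcal M} and use commutativity to obtain
\[
\mathcal{M}_{\phi_1}^*=\mathcal{M}_{\phi_2}\mathcal{M}_{\phi_3}^*,\qquad \mathcal{M}_{\phi_2}^*=\mathcal{M}_{\phi_1}\mathcal{M}_{\phi_3}^*.
\]
Because all factors commute, raising to powers is legitimate and gives $\mathcal{M}_{\phi_1}^{*\alpha_1}=\mathcal{M}_{\phi_2}^{\alpha_1}\mathcal{M}_{\phi_3}^{*\alpha_1}$ and $\mathcal{M}_{\phi_2}^{*\alpha_2}=\mathcal{M}_{\phi_1}^{\alpha_2}\mathcal{M}_{\phi_3}^{*\alpha_2}$. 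Substituting these into a generator and collecting the $\mathcal{M}_{\phi_3}^*$-factors yields, with $s=\alpha_1+\alpha_2+\alpha_3$,
\[
\mathcal{M}_{\phi_1}^{*\alpha_1}\mathcal{M}_{\phi_2}^{*\alpha_2}\mathcal{M}_{\phi_3}^{*\alpha_3}h=\mathcal{M}_{\phi_2}^{\alpha_1}\mathcal{M}_{\phi_1}^{\alpha_2}\mathcal{M}_{\phi_3}^{*s}h.
\]
Applying $\mathcal{M}_{\phi_3}^{s}$ and cancelling via $\mathcal{M}_{\phi_3}\mathcal{M}_{\phi_3}^*=I$ shows that $\mathcal{M}_{\phi_3}^{s}$ applied to the generator equals $\mathcal{M}_{\phi_2}^{\alpha_1}\mathcal{M}_{\phi_1}^{\alpha_2}h$, which lies in $H^2_{-}(\mathfrak{R}_{\mathrm{II}})$ since $H^2_{-}(\mathfrak{R}_{\mathrm{II}})$ is invariant under each $\mathcal{M}_{\phi_i}$ (indeed $\mathcal{M}_{\phi_i}|_{H^2_{-}(\mathfrak{R}_{\mathrm{II}})}=\mathcal{T}_{\phi_i}$, as used in Lemma \ref{lemma 1}).

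Finally, for a general $f$ in the span, I would write $f$ as a finite linear combination of generators indexed by multi-indices $(\alpha_1^{(j)},\alpha_2^{(j)},\alpha_3^{(j)})$ and set $r=\max_j\big(\alpha_1^{(j)}+\alpha_2^{(j)}+\alpha_3^{(j)}\big)$. For each term, with $s_j\le r$, I factor $\mathcal{M}_{\phi_3}^{r}=\mathcal{M}_{\phi_3}^{\,r-s_j}\mathcal{M}_{\phi_3}^{\,s_j}$; the inner power lands the generator in $H^2_{-}(\mathfrak{R}_{\mathrm{II}})$ by the previous step, and the residual power $\mathcal{M}_{\phi_3}^{\,r-s_j}$ preserves $H^2_{-}(\mathfrak{R}_{\mathrm{II}})$ by invariance. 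Summing over $j$ gives $\mathcal{M}_{\phi_3}^{r}f\in H^2_{-}(\mathfrak{R}_{\mathrm{II}})$, as desired. The only genuine obstacle is the bookkeeping for the linear combination, since distinct generators require distinct minimal powers of $\mathcal{M}_{\phi_3}$; this is resolved uniformly by taking the maximum and exploiting that $\mathcal{M}_{\phi_3}$, being both unitary and analytic, simultaneously cancels $\mathcal{M}_{\phi_3}^*$ and preserves $H^2_{-}(\mathfrak{R}_{\mathrm{II}})$.
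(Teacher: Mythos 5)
Your proof is correct and follows essentially the same route as the paper: both use the relations in \eqref{relation mathcal M} together with commutativity of the (restricted) multiplication operators to trade each adjoint factor $\mathcal{M}_{\phi_1}^{*},\mathcal{M}_{\phi_2}^{*}$ for $\mathcal{M}_{\phi_2},\mathcal{M}_{\phi_1}$ at the cost of powers of $\mathcal{M}_{\phi_3}^{*}$, which are then cancelled by a sufficiently large power of the unitary $\mathcal{M}_{\phi_3}$, landing in $H^2_{-}(\mathfrak{R}_{\mathrm{II}})$ by invariance. Your explicit treatment of general linear combinations via $r=\max_j s_j$ is a minor point the paper leaves implicit, but it does not change the argument.
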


\begin{proof}
  
Consider $f=\mathcal{M}_{\phi_1}^{*\alpha_1}\mathcal{M}_{\phi_2}^{*\alpha_2}\mathcal{M}_{\phi_3}^{*\alpha_3}h$ for some $h\in H^2_{-}(\mathfrak{R}_{\mathrm{II}})$ and $\alpha_1,\alpha_2, \alpha_3\in \mathbb Z_+.$ Choose $r\in \mathbb Z_{+}$ such that $r>\alpha_1+\alpha_2+\alpha_3.$ Then by \eqref{relation mathcal M}, we have
\beqn
    \mathcal{M}_{\phi_3}^{r}\mathcal{M}_{\phi_1}^{*\alpha_1}\mathcal{M}_{\phi_2}^{*\alpha_2}\mathcal{M}_{\phi_3}^{*\alpha_3}h
&=&\mathcal{M}_{\phi_3}^{r-\alpha_3}\mathcal{M}_{\phi_1}^{*\alpha_1}\mathcal{M}_{\phi_2}^{*\alpha_2}\mathcal{M}_{\phi_3}^{\alpha_3}\mathcal{M}_{\phi_3}^{*\alpha_3}h\\
&=&\mathcal{M}_{\phi_3}^{r-\alpha_3}\mathcal{M}_{\phi_1}^{*\alpha_1}\mathcal{M}_{\phi_2}^{*\alpha_2}h\\
%&=&\mathcal{M}_{\phi_3}^{r-\alpha_3-\alpha_2}\mathcal{M}_{\phi_1}^{*\alpha_1}\mathcal{M}_{\phi_3}^{\alpha_2}\mathcal{M}_{\phi_2}^{*\alpha_2}h\\
&=&\mathcal{M}_{\phi_3}^{r-\alpha_3-\alpha_2}\mathcal{M}_{\phi_1}^{*\alpha_1}\mathcal{M}_{\phi_1}^{\alpha_2}h\\
%&=&\mathcal{M}_{\phi_3}^{r-\alpha_3-\alpha_2-\alpha_1}\mathcal{M}_{\phi_3}^{\alpha_1}\mathcal{M}_{\phi_1}^{*\alpha_1}\mathcal{M}_{\phi_1}^{\alpha_2}h\\
&=&\mathcal{M}_{\phi_3}^{r-\alpha_3-\alpha_2-\alpha_1}\mathcal{M}_{\phi_2}^{\alpha_1}\mathcal{M}_{\phi_1}^{\alpha_2}h.
\eeqn
This proves the lemma.
\end{proof}

The following result plays a key role in the proof of Theorem \ref{Brown-Halmos}.
\begin{lemma}\label{key lemma}
    Let $A$ be a bounded linear operator on $H_{-}^2(\mathfrak{R}_{\mathrm{II}})$ such that $A$ satisfies the following conditions:
    \beq \label{brown-halmos10} A\mathcal{T}_{\phi_1}=\mathcal{T}_{\phi_2}^* A \mathcal{T}_{\phi_3}, \;A\mathcal{T}_{\phi_2}=\mathcal{T}_{\phi_1}^*A \mathcal{T}_{\phi_3}, \text{and}\,\, \mathcal{T}_{\phi_3}^*A\mathcal{T}_{\phi_3}= A.\eeq 
    Then there exists a bounded linear operator $X$ on $L_{-}^2(S_{\mathfrak{R}_{\mathrm{II}}})$ 
    such that \beqn P_{H_{-}^2(\mathfrak{R}_{\mathrm{II}})}X|_{H_{-}^2(\mathfrak{R}_{\mathrm{II}})}=A, \; X\mathcal{M}_{\phi_i}=\mathcal{M}_{\phi_i}X, \mbox{ for } i=1,2,3 \mbox{ and } \|X\|=\|A\|. \eeqn
\end{lemma}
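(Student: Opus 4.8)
The plan is to build the extension $X$ on the dense subspace
$$\mathcal{D}=\sspan\{\mathcal{M}_{\phi_1}^{*\alpha_1}\mathcal{M}_{\phi_2}^{*\alpha_2}\mathcal{M}_{\phi_3}^{*\alpha_3}h: h\in H_{-}^2(\mathfrak{R}_{\mathrm{II}}), \alpha_i\in \mathbb Z_{+}\}$$
(which is dense in $L_{-}^2(S_{\mathfrak{R}_{\mathrm{II}}})$ by the minimality established in Lemma~\ref{lemma 1}) and then show it is bounded, so that it extends to all of $L_{-}^2(S_{\mathfrak{R}_{\mathrm{II}}})$. The natural definition, forced by the requirement that $X$ commute with each $\mathcal{M}_{\phi_i}$ and restrict to $A$, is
$$X\big(\mathcal{M}_{\phi_1}^{*\alpha_1}\mathcal{M}_{\phi_2}^{*\alpha_2}\mathcal{M}_{\phi_3}^{*\alpha_3}h\big)=\mathcal{M}_{\phi_1}^{*\alpha_1}\mathcal{M}_{\phi_2}^{*\alpha_2}\mathcal{M}_{\phi_3}^{*\alpha_3}Ah.$$
Intuitively, since $A$ already intertwines the $\mathcal{T}_{\phi_i}$'s compatibly with the normal extension (that is what \eqref{brown-halmos10} encodes), pushing $A$ past the adjoint normal multipliers should be consistent.

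First I would check that $X$ is well defined, i.e. independent of the representation of an element of $\mathcal{D}$; equivalently, that $X$ is a well-defined linear map, it suffices to verify it on a spanning set and check consistency. The key computational engine here is Lemma~\ref{lemma 2}: for any $f\in\mathcal{D}$ there is $r\in\mathbb Z_+$ with $\mathcal{M}_{\phi_3}^r f\in H_{-}^2(\mathfrak{R}_{\mathrm{II}})$. Since $\mathcal{M}_{\phi_3}$ is unitary on $L_{-}^2(S_{\mathfrak{R}_{\mathrm{II}}})$ (because $\mathcal{M}_{\phi_3}^*\mathcal{M}_{\phi_3}=I$ from \eqref{relation mathcal M}, and $\mathcal{M}_{\phi_3}$ is normal), I can characterize $X$ intrinsically: the desired commutation $X\mathcal{M}_{\phi_3}=\mathcal{M}_{\phi_3}X$ lets me define
$$Xf=\mathcal{M}_{\phi_3}^{*r}\,A\,(\mathcal{M}_{\phi_3}^r f),\qquad \mathcal{M}_{\phi_3}^r f\in H_{-}^2(\mathfrak{R}_{\mathrm{II}}),$$
and I would show this is independent of the choice of such $r$, using that the third relation in \eqref{brown-halmos10}, namely $\mathcal{T}_{\phi_3}^*A\mathcal{T}_{\phi_3}=A$ together with $\mathcal{T}_{\phi_3}^*\mathcal{T}_{\phi_3}=I$, gives $A\mathcal{T}_{\phi_3}=\mathcal{T}_{\phi_3}A$ on $H_{-}^2(\mathfrak{R}_{\mathrm{II}})$. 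This $\mathcal{M}_{\phi_3}$-shifting definition is cleaner than the multi-index one and makes well-definedness reduce to a single relation.

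Next I would verify the two intertwining identities $X\mathcal{M}_{\phi_1}=\mathcal{M}_{\phi_1}X$ and $X\mathcal{M}_{\phi_2}=\mathcal{M}_{\phi_2}X$ on $\mathcal{D}$; this is where the first two relations of \eqref{brown-halmos10} enter. Concretely, on a vector $f$ with $\mathcal{M}_{\phi_3}^r f=h\in H_{-}^2(\mathfrak{R}_{\mathrm{II}})$, I would use the relations in \eqref{relation mathcal M} (e.g. $\mathcal{M}_{\phi_1}=\mathcal{M}_{\phi_2}^*\mathcal{M}_{\phi_3}$) to move $\mathcal{M}_{\phi_1}$ inside, convert the restricted action to $\mathcal{T}_{\phi_i}$, apply the matching identity in \eqref{brown-halmos10}, and convert back. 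The main obstacle I anticipate is the boundedness estimate $\|X\|=\|A\|$. The commutation $X\mathcal{M}_{\phi_3}=\mathcal{M}_{\phi_3}X$ with $\mathcal{M}_{\phi_3}$ unitary is exactly what saves this: for $f\in\mathcal{D}$,
$$\|Xf\|=\|\mathcal{M}_{\phi_3}^r Xf\|=\|X\mathcal{M}_{\phi_3}^r f\|=\|A(\mathcal{M}_{\phi_3}^r f)\|\le\|A\|\,\|\mathcal{M}_{\phi_3}^r f\|=\|A\|\,\|f\|,$$
since $\mathcal{M}_{\phi_3}^r f\in H_{-}^2(\mathfrak{R}_{\mathrm{II}})$ and $A$ acts there. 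Thus $X$ is bounded with $\|X\|\le\|A\|$ on the dense subspace $\mathcal{D}$ and extends to $L_{-}^2(S_{\mathfrak{R}_{\mathrm{II}}})$; the intertwining relations and $P_{H_{-}^2(\mathfrak{R}_{\mathrm{II}})}X|_{H_{-}^2(\mathfrak{R}_{\mathrm{II}})}=A$ persist by continuity, and $\|A\|\le\|X\|$ is immediate from $A=P X|_{H_{-}^2(\mathfrak{R}_{\mathrm{II}})}$, giving equality. The delicate point throughout is ensuring every manipulation stays inside $L_{-}^2(S_{\mathfrak{R}_{\mathrm{II}}})$ and that the $r$-independence argument is airtight.
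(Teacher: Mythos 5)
Your construction has a fatal gap at the well-definedness step. The claim that $\mathcal{T}_{\phi_3}^*A\mathcal{T}_{\phi_3}=A$ together with $\mathcal{T}_{\phi_3}^*\mathcal{T}_{\phi_3}=I$ yields $A\mathcal{T}_{\phi_3}=\mathcal{T}_{\phi_3}A$ is false: multiplying $\mathcal{T}_{\phi_3}^*A\mathcal{T}_{\phi_3}=A$ on the left by $\mathcal{T}_{\phi_3}$ gives $(\mathcal{T}_{\phi_3}\mathcal{T}_{\phi_3}^*)A\mathcal{T}_{\phi_3}=\mathcal{T}_{\phi_3}A$, and $\mathcal{T}_{\phi_3}\mathcal{T}_{\phi_3}^*$ is the projection onto $\mathrm{ran}\,\mathcal{T}_{\phi_3}=\phi_3 H_{-}^2(\mathfrak{R}_{\mathrm{II}})$, a \emph{proper} subspace ($\mathcal{T}_{\phi_3}$ is an isometry, not a unitary). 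This is exactly the classical disc phenomenon: $T_z^*T_uT_z=T_u$ holds for every Toeplitz operator, but $T_uT_z=T_zT_u$ only for analytic symbols. Concretely, take $A=\mathcal{T}_{\phi_3}^*=\mathcal{T}_{\bar{\phi}_3}$, which satisfies all three relations in \eqref{brown-halmos10} (e.g.\ $\mathcal{T}_{\phi_3}^*A\mathcal{T}_{\phi_3}=\mathcal{T}_{\phi_3}^*\mathcal{T}_{\phi_3}^*\mathcal{T}_{\phi_3}=A$, and the other two follow from \eqref{relation mathcal M} since $|\phi_3|=1$ a.e.\ on $S_{\mathfrak{R}_{\mathrm{II}}}$). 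Choose $h=z_3\in\mathrm{Hom}^-(1)$, so that $h\perp\phi_3H_{-}^2(\mathfrak{R}_{\mathrm{II}})$ and hence $Ah=\mathcal{T}_{\phi_3}^*h=0$, and let $f=\mathcal{M}_{\phi_3}^*h$. Then $r=1$ is admissible ($\mathcal{M}_{\phi_3}f=h$) and your formula gives $Xf=\mathcal{M}_{\phi_3}^*Ah=0$, while $r=2$ is also admissible ($\mathcal{M}_{\phi_3}^2f=\phi_3h$) and gives $Xf=\mathcal{M}_{\phi_3}^{*2}\mathcal{T}_{\phi_3}^*\mathcal{T}_{\phi_3}h=\mathcal{M}_{\phi_3}^{*2}h\neq 0$. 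So your $X$ is not well defined, and the minimal admissible shift even produces the \emph{wrong} value (the correct extension here is $X=\mathcal{M}_{\phi_3}^*$, matching $r=2$). The same error infects the norm estimate: the step $\|X\mathcal{M}_{\phi_3}^rf\|=\|A(\mathcal{M}_{\phi_3}^rf)\|$ tacitly assumes the \emph{restriction} identity $X|_{H_{-}^2(\mathfrak{R}_{\mathrm{II}})}=A$, but no such $X$ can exist in general: if $X$ commutes with $\mathcal{M}_{\phi_3}$ and restricts to $A$, then for $k\in H_{-}^2(\mathfrak{R}_{\mathrm{II}})$ one gets $A\mathcal{T}_{\phi_3}k=X\mathcal{M}_{\phi_3}k=\mathcal{M}_{\phi_3}Xk=\mathcal{T}_{\phi_3}Ak$, i.e.\ $A$ commutes with $\mathcal{T}_{\phi_3}$ --- which fails for $A=\mathcal{T}_{\phi_3}^*$. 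The lemma asserts, and can only assert, the \emph{compression} identity $P_{H_{-}^2(\mathfrak{R}_{\mathrm{II}})}X|_{H_{-}^2(\mathfrak{R}_{\mathrm{II}})}=A$.

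The paper circumvents precisely this obstruction by working weakly rather than pointwise: it sets $A_r=\mathcal{M}_{\phi_3}^{*r}AP_{H_{-}^2(\mathfrak{R}_{\mathrm{II}})}\mathcal{M}_{\phi_3}^{r}$ and shows that for $g,h$ in the dense span $V$, choosing $r_0$ (via Lemma \ref{lemma 2}) that shifts \emph{both} $g$ and $h$ into $H_{-}^2(\mathfrak{R}_{\mathrm{II}})$, the entries $\langle A_rg,h\rangle$ are constant for $r\geq r_0$ by the relation $\mathcal{T}_{\phi_3}^*A\mathcal{T}_{\phi_3}=A$. The resulting bounded sesquilinear form defines $X$ with $\|X\|\leq\|A\|$; the commutation with $\mathcal{M}_{\phi_1},\mathcal{M}_{\phi_2}$ and the compression identity are then verified entrywise using the first two relations in \eqref{brown-halmos10}. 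The essential point your argument misses is that only these matrix entries stabilize --- the vectors $A_rf$ themselves need not converge, and no single shift computes $Xf$. If you want to salvage your outline, you must replace the pointwise definition by this weak-limit (or a Banach-limit) construction.
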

\begin{proof}
Consider a sequence $\{A_r\}_{r \in \mathbb Z_+}$ of operators defined on $L_{-}^2(S_{\mathfrak{R}_{\mathrm{II}}})$ given by $A_r=\mathcal{M}_{\phi_3}^{*r}AP_{H_{-}^2(\mathfrak{R}_{\mathrm{II}})}\mathcal{M}_{\phi_3}^{r}$. Let $V:=\mathrm{span}\{\mathcal{M}_{\phi_1}^{*\alpha_1}\mathcal{M}_{\phi_2}^{*\alpha_2}\mathcal{M}_{\phi_3}^{*\alpha_3}h: h\in H_{-}^2(\mathfrak{R}_{\mathrm{II}}), \alpha_i\in \mathbb Z_{+}, i=1, 2, 3\}.$ Since $\boldsymbol{\mathcal{M}_\phi}$ is the minimal normal extension of $\boldsymbol{\mathcal{T}_\phi}$ (see Lemma \ref{lemma 1}), $V$ is dense in $L_{-}^2(S_{\mathfrak{R}_{\mathrm{II}}})$.  By using Lemma \ref{lemma 2}, for any $g,h \in V,$ there exists $r_0\in \mathbb Z_{+}$ such that $\mathcal{M}_{\phi_3}^{r_0}g, \; \mathcal{M}_{\phi_3}^{r_0}h \in H_{-}^2(\mathfrak{R}_{\mathrm{II}}).$
 
   Therefore, for any $r\geq r_0,$
\begin{align*}
       \langle A_{r}g,  h\rangle_{L_{-}^2(S_{\mathfrak{R}_{\mathrm{II}}})}
       &= \Big\langle \mathcal{M}_{\phi_3}^{*r}AP_{H_{-}^2(\mathfrak{R}_{\mathrm{II}})}\mathcal{M}_{\phi_3}^{r}g,  h\Big\rangle_{L_{-}^2(S_{\mathfrak{R}_{\mathrm{II}}})}\\
       &= \Big\langle A \mathcal{M}_{\phi_3}^{r-r_0}\mathcal{M}_{\phi_3}^{r_0}g,  \mathcal{M}_{\phi_3}^{r-r_0}\mathcal{M}_{\phi_3}^{r_0}h\Big\rangle_{L_{-}^2(S_{\mathfrak{R}_{\mathrm{II}}})}\\
       &= \Big\langle A \mathcal{T}_{\phi_3}^{r-r_0}(\mathcal{M}_{\phi_3}^{r_0}g),  \mathcal{T}_{\phi_3}^{r-r_0}(\mathcal{M}_{\phi_3}^{r_0}h)\Big\rangle_{H_{-}^2(\mathfrak{R}_{\mathrm{II}})}\\
       &= \Big\langle \mathcal{T}_{\phi_3}^{*r-r_0}A \mathcal{T}_{\phi_3}^{r-r_0}(\mathcal{M}_{\phi_3}^{r_0}g),  (\mathcal{M}_{\phi_3}^{r_0}h)\Big\rangle_{H_{-}^2(\mathfrak{R}_{\mathrm{II}})}\\
       &\overset{\eqref{brown-halmos10}}{=} \Big\langle A (\mathcal{M}_{\phi_3}^{r_0}g),  (\mathcal{M}_{\phi_3}^{r_0}h)\Big\rangle_{H_{-}^2(\mathfrak{R}_{\mathrm{II}})}.
   \end{align*}
 
This shows that
\beqn 
    \lim_{r\to \infty}\langle A_{r}g,  h\rangle_{L_{-}^2(S_{\mathfrak{R}_{\mathrm{II}}})}=\Big\langle A (\mathcal{M}_{\phi_3}^{r_0}g),  (\mathcal{M}_{\phi_3}^{r_0}h)\Big\rangle_{H_{-}^2(\mathfrak{R}_{\mathrm{II}}).}
\eeqn
Note that $\|A_r\|\leq \|A\|$ for all $r\in \mathbb Z_+$ and the bi-linear form defined by $ \displaystyle B(g, h):= \lim_{r\to \infty}\langle A_{r}g,  h\rangle$ is bounded.  In fact, $|B(g, h)|\leq \|A\|\|g\|\|h\|.$
We now extend $B$ to a bounded bi-linear form on $L_{-}^2(S_{\mathfrak{R}_{\mathrm{II}}})$ and denote it by $B$ again.
Therefore, there exists a unique bounded linear operator $X$ on $L_{-}^2(S_{\mathfrak{R}_{\mathrm{II}}})$ such that for any $g,h\in L_{-}^2(S_{\mathfrak{R}_{\mathrm{II}}}),$
\beq \label{limAr} \lim_{r\to \infty}\langle A_{r}g,  h\rangle=B(g, h)= \langle X g,  h\rangle_{L_{-}^2(S_{\mathfrak{R}_{\mathrm{II}}})}.\eeq
%Note that $\|X\| \leq \|A\|$ and
Moreover, 
\begin{align*}
 \langle \mathcal{M}_{\phi_3}^* X \mathcal{M}_{\phi_3} g,  h\rangle_{L_{-}^2(S_{\mathfrak{R}_{\mathrm{II}}})}&=  \lim_{r\to \infty}\langle A_{r}\mathcal{M}_{\phi_3}g,  \mathcal{M}_{\phi_3}h\rangle_{L_{-}^2(S_{\mathfrak{R}_{\mathrm{II}}})}\\
 & =  \lim_{r\to \infty}\langle A_{r+1}g,  h\rangle_{L_{-}^2(S_{\mathfrak{R}_{\mathrm{II}}})}\\
 & \overset{\eqref{limAr}}{=}  \langle X g,  h\rangle_{L_{-}^2(S_{\mathfrak{R}_{\mathrm{II}}})} \end{align*}
 for all $g,h\in L_{-}^2(S_{\mathfrak{R}_{\mathrm{II}}}).$
 This  shows that $\mathcal{M}_{\phi_3}^* X \mathcal{M}_{\phi_3}=X.$

Now for any $g,h\in V,$ by Lemma \ref{lemma 2}, choose $r$ sufficiently large such that $\mathcal{M}_{\phi_3}^rg, \; \mathcal{M}_{\phi_3}^rh \in H_{-}^2(\mathfrak{R}_{\mathrm{II}}).$ Then

\begin{align*}
   &\Big\langle \mathcal{M}_{\phi_3}^{*r}A^*P_{H_{-}^2(\mathfrak{R}_{\mathrm{II}})}\mathcal{M}_{\phi_3}^{r}g, \mathcal{M}_{\phi_1}h\Big\rangle_{L_{-}^2(S_{\mathfrak{R}_{\mathrm{II}}})}\\
   &= \Big\langle A^* \mathcal{M}_{\phi_3}^{r}g, \mathcal{M}_{\phi_1}(\mathcal{M}_{\phi_3}^{r}h)\Big\rangle_{L_{-}^2(S_{\mathfrak{R}_{\mathrm{II}}})}\\
  % &=\Big\langle \mathcal{T}^* \mathcal{M}_{\phi_3}^{r}g, \mathcal{T}_{\phi_1}(\mathcal{M}_{\phi_3}^{r}h)\Big\rangle_{H_{-}^2(\mathfrak{R}_{\mathrm{II}})}\\
   &=\Big\langle \mathcal{T}_{\phi_1}^* A^* \mathcal{M}_{\phi_3}^{r}g, (\mathcal{M}_{\phi_3}^{r}h)\Big\rangle_{H_{-}^2(\mathfrak{R}_{\mathrm{II}})}\\
   &\overset{\eqref{brown-halmos10}}{=}\Big\langle \mathcal{T}_{\phi_3}^* A^* \mathcal{T}_{\phi_2} \mathcal{M}_{\phi_3}^{r}g, (\mathcal{M}_{\phi_3}^{r}h)\Big\rangle_{H_{-}^2(\mathfrak{R}_{\mathrm{II}})}\\
    &=\Big\langle  A^* P_{H_{-}^2(\mathfrak{R}_{\mathrm{II}})}\mathcal{M}_{\phi_2} \mathcal{M}_{\phi_3}^{r}g, \mathcal{M}_{\phi_3}^{r+1}h\Big\rangle_{H_{-}^2(\mathfrak{R}_{\mathrm{II}})}
  % &=\Big\langle  \mathcal{T}^* \mathcal{T}_{\phi_2} \mathcal{M}_{\phi_3}^{r}g, \mathcal{T}_{\phi_3}(\mathcal{M}_{\phi_3}^{r}h)\Big\rangle_{H_{-}^2(\mathfrak{R}_{\mathrm{II}})}\\
  \end{align*}
\begin{align*}
   &\overset{\eqref{relation mathcal M}}{=}\langle A_{r+1}^*\mathcal{M}_{\phi_1}^*g, h\rangle_{L_{-}^2(S_{\mathfrak{R}_{\mathrm{II}}})}.
\end{align*}
Therefore, 
\begin{align*}
     \langle \mathcal{M}_{\phi_1}^* X^* g,  h\rangle_{L_{-}^2(S_{\mathfrak{R}_{\mathrm{II}}})} &= \lim_{r\to \infty}\langle A_{r+1}^*\mathcal{M}_{\phi_1}^*g, h\rangle_{L_{-}^2(S_{\mathfrak{R}_{\mathrm{II}}})}\\
     &\overset{\eqref{limAr}}{=} \langle X^* \mathcal{M}_{\phi_1}^* g,  h\rangle_{L_{-}^2(S_{\mathfrak{R}_{\mathrm{II}}})}.
\end{align*}
Since $V$ is dense in $L_{-}^2(S_{\mathfrak{R}_{\mathrm{II}}})$, 
it follows that $X \mathcal{M}_{\phi_1}=  \mathcal{M}_{\phi_1} X.$ A similar computation using the fact $A\mathcal{T}_{\phi_2}= \mathcal{T}_{\phi_1}^*A\mathcal{T}_{\phi_3}$ yields
$X \mathcal{M}_{\phi_2}=  \mathcal{M}_{\phi_2} X.$

We now prove $P_{H_{-}^2(\mathfrak{R}_{\mathrm{II}})} X|_{H_{-}^2(\mathfrak{R}_{\mathrm{II}})}=A.$ Let $g,h \in H_{-}^2(\mathfrak{R}_{\mathrm{II}}),$ then
\begin{align*}
 \langle  X g,  h\rangle_{L_{-}^2(S_{\mathfrak{R}_{\mathrm{II}}})}&\overset{\eqref{limAr}}{=} \lim_{r\to \infty}\Big\langle \mathcal{M}_{\phi_3}^{*r}AP_{H_{-}^2(\mathfrak{R}_{\mathrm{II}})}\mathcal{M}_{\phi_3}^{r}g,  h\Big\rangle_{L_{-}^2(S_{\mathfrak{R}_{\mathrm{II}}})}\\
 &=\lim_{r\to \infty}\Big\langle A \mathcal{M}_{\phi_3}^{r}g,   \mathcal{M}_{\phi_3}^{r}h\Big\rangle_{H_{-}^2(\mathfrak{R}_{\mathrm{II}})}\\
 &=\lim_{r\to \infty}\Big\langle A \mathcal{T}_{\phi_3}^{r}g,   \mathcal{T}_{\phi_3}^{r}h\Big\rangle_{H_{-}^2(\mathfrak{R}_{\mathrm{II}})}
 %&=\lim_{r\to \infty}\Big\langle \mathcal{T}_{\phi_3}^{*r} \mathcal{T} \mathcal{T}_{\phi_3}^{r}g,   h\Big\rangle_{H_{-}^2(\mathfrak{R}_{\mathrm{II}})}\\
 \overset{\eqref{brown-halmos10}}{=}\Big\langle  A g,   h\Big\rangle_{H_{-}^2(\mathfrak{R}_{\mathrm{II}})}.
\end{align*}
 Clearly, $\|A\| \leq \|X\|$ and from \eqref{limAr}, we get $\|X\| \leq \|A\|$. This completes the proof.
\end{proof}

The following result is a several variable analog of \cite[Corollary 12.7]{CJ2000}, we recall here for ease of reference (see also \cite[Lemma 3.4]{KMP2025}).

 \begin{lemma} \label{multiplication operator}
    Let $Y$ be a bounded linear operator on $L^2(S_{\mathbb E})$ such that $YM_{z_i}=M_{z_i}Y,\; i=1,2,3.$ Then there exists $\hat{\psi}\in L^\infty(S_{\mathbb E})$ such that $Y=M_{\hat{\psi}}$, multiplication operator on $L^2(S_\mathbb E)$ by $\hat{\psi}.$
\end{lemma}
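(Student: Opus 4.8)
The plan is to show that $Y$ commutes with multiplication by a dense enough supply of functions to force $Y$ itself to be a multiplication operator. The coordinate functions $z_1, z_2, z_3$ and their boundary relations on $S_{\mathbb E}$ are the key: since $Y$ commutes with each $M_{z_i}$, and since $M_{z_i}^*$ can be expressed through the relations \eqref{M-relations}, I expect $Y$ to commute with $M_{z_i}^*$ as well, and hence with multiplication by an entire $*$-algebra of functions. First I would verify that $Y$ commutes with the adjoints $M_{z_i}^*$. This is where the Shilov-boundary relations do the work: from \eqref{M-relations} we have $M_{z_3}$ unitary, so $M_{z_3}^* = M_{z_3}^{-1}$, and $YM_{z_3} = M_{z_3}Y$ immediately gives $YM_{z_3}^* = M_{z_3}^*Y$. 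Then $M_{z_1} = M_{z_2}^* M_{z_3}$ rearranges (using that $M_{z_3}$ is unitary) to $M_{z_2}^* = M_{z_1} M_{z_3}^*$, so $Y$ commutes with $M_{z_2}^*$ because it commutes with both factors on the right; symmetrically $Y$ commutes with $M_{z_1}^*$.

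Once $Y$ commutes with $M_{z_1}, M_{z_2}, M_{z_3}$ and their adjoints, it commutes with every operator in the unital $*$-algebra $\mathcal A$ generated by $M_{z_1}, M_{z_2}, M_{z_3}$, and therefore with every operator in its closure. The next step is to identify that closure, or at least the set of functions $\psi$ for which $M_\psi$ lies in the commutant of $\{Y\}$. Concretely, I would set $\hat\psi := Y(1)$, where $1$ denotes the constant function (which lies in $L^2(S_{\mathbb E})$ since $d\Theta_{\mathbb E}$ is a probability measure). The candidate identity is $Y = M_{\hat\psi}$. To establish this on a dense set, observe that for any polynomial $p$ in $z_1, z_2, z_3$ and their conjugates, we may write $p = \pi(M_{z_1}, M_{z_2}, M_{z_3}, M_{z_1}^*, M_{z_2}^*, M_{z_3}^*)\,1$ for the corresponding $*$-polynomial $\pi$, and then
\[
Y(p) = Y\,\pi(\ldots)\,1 = \pi(\ldots)\,Y(1) = \pi(\ldots)\,\hat\psi = p\,\hat\psi = M_{\hat\psi}(p),
\]
using that $Y$ commutes with each generator and its adjoint. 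Thus $Y$ and $M_{\hat\psi}$ agree on the algebra of $*$-polynomials.

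It then remains to check two things: that this algebra of trigonometric-type polynomials is dense in $L^2(S_{\mathbb E})$, and that $\hat\psi$ is essentially bounded with $\|\hat\psi\|_\infty = \|Y\|$ so that $M_{\hat\psi}$ is a genuine bounded multiplication operator agreeing with $Y$ everywhere by continuity. For density I would invoke the Stone–Weierstrass theorem: the $*$-polynomials in $z_1, z_2, z_3$ form a self-adjoint unital subalgebra of $C(S_{\mathbb E})$ that separates points of the compact set $S_{\mathbb E}$, hence are uniformly dense in $C(S_{\mathbb E})$, which is in turn $L^2(d\Theta_{\mathbb E})$-dense. The boundedness of $\hat\psi$ is the standard final point: one shows $|\hat\psi| \le \|Y\|$ almost everywhere by a localization argument (testing against characteristic-function-type elements, or noting that $M_{\hat\psi}$ extends to a bounded operator of norm $\|Y\|$ forces $\hat\psi \in L^\infty$ with the same norm). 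I expect the boundedness of $\hat\psi$ to be the main technical obstacle, since $\hat\psi = Y(1)$ is a priori only in $L^2$; the argument that its essential supremum is controlled by the operator norm of $Y$ — rather than merely its $L^2$-norm — is the crux, and is exactly the content imported from the cited one-variable result \cite[Corollary 12.7]{CJ2000} adapted to three commuting multiplications.
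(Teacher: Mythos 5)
Your proof is correct, and it takes a genuinely different route from the paper, which in fact offers no proof at all: the lemma is imported there as a several-variable analog of \cite[Corollary 12.7]{CJ2000} (see also \cite[Lemma 3.4]{KMP2025}). The argument behind those citations runs through Fuglede's theorem (an operator commuting with a normal operator automatically commutes with its adjoint) together with the fact that $L^\infty$ of a finite measure space, acting by multiplication on $L^2$, is a maximal abelian von Neumann algebra, hence equal to its own commutant. Your route replaces the one nontrivial general tool, Fuglede, by an elementary observation specific to the tetrablock: by \eqref{M-relations}, $M_{z_3}$ is unitary and $M_{z_2}^*=M_{z_1}M_{z_3}^*$, $M_{z_1}^*=M_{z_2}M_{z_3}^*$, so commutation with the three $M_{z_i}$ already forces commutation with their adjoints. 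The remainder of your argument (the vector $\hat\psi:=Y(1)$, agreement of $Y$ with $M_{\hat\psi}$ on $*$-polynomials, Stone--Weierstrass density in $C(S_{\mathbb E})$ and hence in $L^2(d\Theta_{\mathbb E})$) is a concrete unwinding of the maximal-abelian-algebra argument. What you gain is a self-contained, elementary proof exploiting the geometry of $S_{\mathbb E}$; what the paper's citation gains is brevity and generality, since the Fuglede route works for any commuting tuple of normal multiplication operators with no special boundary relations.

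One step you should write out rather than gesture at: the deduction that $|\hat\psi|\le\|Y\|$ almost everywhere. You cannot immediately test $Y$ against characteristic functions, because $\chi_E$ is not a $*$-polynomial and at that stage the identity $Yf=\hat\psi f$ is known only for polynomials $f$. The standard repair: given $f\in L^2(S_{\mathbb E})$, choose $*$-polynomials $p_n\to f$ in $L^2$; then $\hat\psi p_n=Yp_n\to Yf$ in $L^2$, and along a subsequence both $p_n\to f$ and $Yp_n\to Yf$ almost everywhere, whence $Yf=\hat\psi f$ a.e. Taking $f=\chi_E$ now gives $\int_E|\hat\psi|^2\,d\Theta_{\mathbb E}=\|Y\chi_E\|^2\le\|Y\|^2\,\Theta_{\mathbb E}(E)$ for every Borel set $E$, which forces $|\hat\psi|\le\|Y\|$ a.e.; then $M_{\hat\psi}$ is a bounded operator agreeing with $Y$ on a dense subspace, so $Y=M_{\hat\psi}$.
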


Now, we are ready to prove the main result of this paper.

\begin{proof}[Proof of Theorem \ref{Brown-Halmos}:]
    Let $T_{\hat{\psi}}$ be a Toeplitz operator on $H^2(\mathbb E)$ associated with symbol $\hat{\psi} \in L^\infty(S_{\mathbb E}).$ For any $\hat{f},\hat{g}\in H^2(\mathbb E),$
    \beqn \inp{T_{\hat{\psi}}T_{z_1}\hat{f}}{\hat{g}}_{H^2(S_{\mathbb E})} &=& \inp{M_{\hat{\psi}}z_1\hat{f}}{\hat{g}}_{L^2(S_\mathbb E)}\\
    &=& \int_{S_\mathbb E} \hat{\psi}(z) \bar{z}_2 z_3 \hat{f}(z) \overline{\hat{g}(z)} d\Theta_{\mathbb E}(z)\\
    &=& \inp{M_{\hat{\psi}} z_3 \hat{f}}{z_2 \hat{g}}_{L^2(S_\mathbb E)}\\
    &=& \inp{T_{\hat{\psi}} T_{z_3}\hat{f}}{T_{z_2}\hat{g}}_{H^2(\mathbb E)}= \inp{T_{z_2}^*T_{\hat{\psi}} T_{z_3}\hat{f}}{\hat{g}}_{H^2(\mathbb E)}. \eeqn
   Similarly, one can verify that $T_{\hat{\psi}} T_{z_2}=T_{z_1}^* T_{\hat{\psi}} T_{z_3}$, and $T_{z_3}^* T_{\hat{\psi}} T_{z_3}=T_{\hat{\psi}}.$
    
    Conversely, let $T$ be a bounded linear operator on $H^2(\mathbb E)$ satisfying the conditions $T T_{z_1}=T_{z_2}^* T T_{z_3}$, $T T_{z_2}=T_{z_1}^* T T_{z_3}$, and $T_{z_3}^* T T_{z_3}=T.$ Note that the operators $\Psi$ and $\tilde{\Psi}$ given in \eqref{psihardyhardy} and \eqref{eq 12} are unitaries and $\Psi=\tilde{\Psi}|_{H^2(\mathbb E)}.$
     We claim that the bounded linear operator $A=\Psi\; T\; \Psi^*$, defined on $H^2_{-}(\mathfrak{R}_{\mathrm{II}})$ satisfies the following conditions: $$A\mathcal{T}_{\phi_1}=\mathcal{T}_{\phi_2}^* A \mathcal{T}_{\phi_3},\; A\mathcal{T}_{\phi_2}=\mathcal{T}_{\phi_1}^* A \mathcal{T}_{\phi_3}, \text{and}\,\, \mathcal{T}_{\phi_3}^*A\mathcal{T}_{\phi_3}= A.$$ Indeed, 
    \beqn A \mathcal{T}_{\phi_1}&=& \Psi\; T\;  \Psi^* \; \Psi \; T_{z_1} \Psi^*= \Psi\; T T_{z_1} \Psi^*\\
    &=&\Psi\; T_{z_2}^* T T_{z_3} \Psi^*\\
    &=& \Psi\; T_{z_2}^* \Psi^*\; \Psi T \Psi^*\; \Psi T_{z_3} \Psi^*\\
    &=& \mathcal{T}_{\phi_2}^* A \mathcal{T}_{\phi_3}.
    \eeqn
    Similarly, the other equalities can be derived. By Lemma \ref{key lemma}, there exists a bounded linear operator $X$ on $L^2_{-}(S_{\mathfrak{R}_{\mathrm{II}}})$ such that $P_{H_{-}^2(\mathfrak{R}_{\mathrm{II}})}X|_{H_{-}^2(\mathfrak{R}_{\mathrm{II}})}=A,$ $ X\mathcal{M}_{\phi_i}=\mathcal{M}_{\phi_i}X$ for $i=1,2,3$, and $\|X\|=\|A\|.$

If $X'=\tilde{\Psi}^* X \tilde{\Psi}$ is a bounded linear operator on $L^2(S_\mathbb E),$ then for all $i=1,2,3,$ we get \beqn X'M_{z_i}&=& \tilde{\Psi}^* X \;\tilde{\Psi} \;\tilde{\Psi}^* \mathcal{M}_{\phi_i} \tilde{\Psi}=\tilde{\Psi}^* X \mathcal{M}_{\phi_i} \tilde{\Psi}\\
    &=& \tilde{\Psi}^* \mathcal{M}_{\phi_i} X \tilde{\Psi}= \tilde{\Psi}^* \mathcal{M}_{\phi_i} \;\tilde{\Psi} \;\tilde{\Psi}^* X \tilde{\Psi}\\
    &=& M_{z_i} X'.
    \eeqn
For any $\hat{f},\; \hat{g}\in H^2(S_\mathbb E)$, 
    \beqn \inp{P_{H^2(\mathbb E)} X' |_{H^2(\mathbb E)} \hat{f}}{\hat{g}}_{H^2(\mathbb E)}&=& \inp{X' \hat{f}}{\hat{g}}_{L^2(S_\mathbb E)}\\
    &=& \inp{X \tilde{\Psi} \hat{f}}{ \tilde{\Psi} \hat{g}}_{L^2_{-}(S_{\mathfrak{R}_{\mathrm{II}}})}\\
    &=& \inp{X \Psi \hat{f}}{ \Psi \hat{g}}_{L^2_{-}(S_{\mathfrak{R}_{\mathrm{II}}})}\\
    &=& \inp{P_{H_{-}^2(\mathfrak{R}_{\mathrm{II}})}X|_{H_{-}^2(\mathfrak{R}_{\mathrm{II}})}\Psi \hat{f}}{ \Psi \hat{g}}_{H^2_{-}(\mathfrak{R}_{\mathrm{II}})}\\
    &=& \inp{\Psi^* A \Psi \hat{f}}{\hat{g}}_{H^2(\mathbb E)}= \inp{T \hat{f}}{\hat{g}}_{H^2(\mathbb E)}.
    \eeqn
This shows that $P_{H^2(\mathbb E)} X' |_{H^2(\mathbb E)}=T$ and $X' M_{z_i}=M_{z_i} X'$ for all $i=1,2,3.$ The theorem is concluded by applying Lemma \ref{multiplication operator}.
\end{proof}

\section{Compact Toeplitz operator on the tetrablock}

Brown and Halmos observed that every compact Toeplitz operator on the Hardy space of the unit disc is the zero operator (see the Corollary on page 94 of \cite{BH1964}). 
This conclusion for the polydisc, the unit ball, and the classical Cartan domains follows from \cite[Theorem 3.2]{MSS2018}, \cite[Theorem 3.3]{DE2011}, and \cite{KMP2025}, respectively. In this section, we establish that the same phenomenon occurs for Toeplitz operators on the Hardy space of the tetrablock.
To prove this, we adopt the approach of Brown and Halmos from \cite{BH1964} (see also \cite[Theorem 4.20]{GR2024}).

Recall that $H^2_{-}(\mathfrak{R}_{\mathrm{II}})=\{f\in H^2(\mathfrak{R}_{\mathrm{II}}): f\circ\sigma=-f\}.$ Clearly, $1 \notin H^2_{-}(\mathfrak{R}_{\mathrm{II}}).$ 
Let $\mathrm{Hom}^-(n)$ denote the space of homogeneous polynomials of degree $n$ in  $H^2_{-}(\mathfrak{R}_{\mathrm{II}}).$ Then $\mathrm{Hom}^-(n)$ is orthogonal to $\mathrm{Hom}^-(\ell)$ for $n \neq \ell.$
Also, note that $\mathrm{Hom}^-(n)$ is spanned by $$\left \{z_1^{\alpha_1} z_2^{\alpha_2} z_3^{2m-1}: \alpha_1,\alpha_2\in \mathbb Z_+, \alpha_1+\alpha_2=n-(2m-1), \; m=1,\ldots,k \right\},$$ where $k=\frac{n}{2}$ if $n$ is even and $k=\frac{n+1}{2}$ if $n$ is odd. If $d_n^-$ denotes the dimension of $\mathrm{Hom}^-(n)$ then $d_n^-=\frac{(n+1)^2}{4}$ whenever $n$ is odd, and $d_n^-=\frac{n(n+2)}{4}$ whenever $n$ is even.
\begin{lemma}\label{orthonormalbasis}
    For $n\in \mathbb N,$ there is an orthonormal basis $\mathcal{E}_n$ of $\mathrm{Hom}^-(n)$ in $H^2_{-}(\mathfrak{R}_{\mathrm{II}})$ such that $\phi_3 \mathcal{E}_n =\{\phi_3  f: f\in \mathcal{E}_n\}\subseteq\mathcal{E}_{n+2}$, where $\phi_3(z_1,z_2,z_3)=z_1z_2-z_3^2.$ 
\end{lemma}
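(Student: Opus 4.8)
The plan is to exploit the fact that multiplication by $\phi_3$ is an isometry on $H^2_{-}(\mathfrak{R}_{\mathrm{II}})$, and then to build the bases $\mathcal{E}_n$ inductively, shifting the degree by two at each step.

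First I would record the structural properties of $\phi_3$. Since $\phi_3(z_1,z_2,z_3)=z_1z_2-z_3^2$ is homogeneous of degree $2$ and $\sigma$-invariant, multiplication by $\phi_3$ carries a homogeneous antisymmetric polynomial of degree $n$ to one of degree $n+2$; that is, $\phi_3\cdot\mathrm{Hom}^-(n)\subseteq \mathrm{Hom}^-(n+2)$. Moreover, for a polynomial $f$ the product $\phi_3 f$ is again a polynomial, hence already lies in $H^2_{-}(\mathfrak{R}_{\mathrm{II}})$ with no projection needed, so the Toeplitz operator $\mathcal{T}_{\phi_3}$ agrees with honest multiplication by $\phi_3$ on the dense subspace of polynomials. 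By Lemma \ref{lemma 1} we have $\mathcal{T}_{\phi_3}^*\mathcal{T}_{\phi_3}=I$, so $\mathcal{T}_{\phi_3}$ is an isometry; consequently $\inp{\phi_3 f}{\phi_3 g}_{H^2_{-}(\mathfrak{R}_{\mathrm{II}})}=\inp{f}{g}_{H^2_{-}(\mathfrak{R}_{\mathrm{II}})}$ for all polynomials $f,g$ by polarization. In particular, multiplication by $\phi_3$ sends every orthonormal subset of $\mathrm{Hom}^-(n)$ to an orthonormal subset of $\mathrm{Hom}^-(n+2)$.

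Next I would check that there is always room to extend. Using the stated dimension formulas, $d^-_{n+2}>d^-_n$ for each parity of $n$: for odd $n$, $\tfrac{(n+3)^2}{4}>\tfrac{(n+1)^2}{4}$, and for even $n$, $\tfrac{(n+2)(n+4)}{4}>\tfrac{n(n+2)}{4}$. Since multiplication by $\phi_3$ is isometric, hence injective, $\phi_3\,\mathrm{Hom}^-(n)$ is a subspace of $\mathrm{Hom}^-(n+2)$ of dimension $d^-_n<d^-_{n+2}$, and is therefore a proper subspace. The construction then proceeds by induction along the two chains of degrees $\{1,3,5,\dots\}$ and $\{2,4,6,\dots\}$ separately. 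For the base cases $n=1$ and $n=2$, the spaces $\mathrm{Hom}^-(n)$ are finite-dimensional and nonzero, so I simply fix any orthonormal basis $\mathcal{E}_n$. Given an orthonormal basis $\mathcal{E}_n$ of $\mathrm{Hom}^-(n)$, the set $\phi_3\mathcal{E}_n$ is orthonormal in $\mathrm{Hom}^-(n+2)$ by the isometry property; applying Gram--Schmidt to a spanning set of a complement of $\phi_3\,\mathrm{Hom}^-(n)$ in $\mathrm{Hom}^-(n+2)$ produces the remaining unit vectors, and adjoining them to $\phi_3\mathcal{E}_n$ yields an orthonormal basis $\mathcal{E}_{n+2}$ with $\phi_3\mathcal{E}_n\subseteq\mathcal{E}_{n+2}$, as required.

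The only genuinely nontrivial point is the first step: identifying $\mathcal{T}_{\phi_3}$ with plain multiplication by $\phi_3$ on polynomials and deducing from $\mathcal{T}_{\phi_3}^*\mathcal{T}_{\phi_3}=I$ that this multiplication preserves the $H^2_{-}(\mathfrak{R}_{\mathrm{II}})$ inner product. Everything afterward — the dimension monotonicity and the inductive Gram--Schmidt extension — is routine finite-dimensional linear algebra, so I expect the isometry observation to carry the real content of the proof.
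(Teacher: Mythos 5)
Your proposal is correct and follows essentially the same route as the paper: both arguments hinge on the isometry $\mathcal{T}_{\phi_3}^{*}\mathcal{T}_{\phi_3}=I$ from Lemma \ref{lemma 1} (which makes $\phi_3\mathcal{E}_n$ an orthonormal subset of $\mathrm{Hom}^-(n+2)$) and then extend $\phi_3\mathcal{E}_n$ to an orthonormal basis by a Gram--Schmidt induction in steps of two with base cases $n=1,2$; the paper merely makes the extension concrete by orthogonalizing the explicit spanning set $\{(z_1z_2-z_3^2)e,\ (z_1z_2+z_3^2)e,\ z_1^2e,\ z_2^2e : e\in\mathcal{E}_n\}$ with the vectors $\phi_3 e$ placed first. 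One small wording fix: the remaining basis vectors must be produced inside the \emph{orthogonal} complement of $\phi_3\,\mathrm{Hom}^-(n)$ in $\mathrm{Hom}^-(n+2)$ (equivalently, run Gram--Schmidt on the whole spanning set starting from the orthonormal vectors $\phi_3\mathcal{E}_n$, as the paper does), since orthogonalizing a spanning set of an arbitrary linear complement need not yield vectors orthogonal to $\phi_3\mathcal{E}_n$.
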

\begin{proof}
    
Let $\mathcal{E}_1=\{e_1^1\}$ and $\mathcal{E}_2=\{e^2_1,e^2_2\}$ be orthonormal bases of $\mathrm{Hom}^-(1)$ and $\mathrm{Hom}^-(2),$ respectively.  
%we construct orthonormal bases $\mathcal{E}_3$ and $\mathcal{E}_4$ of $\mathrm{Hom}^-(3)$ and $\mathrm{Hom}^-(4),$ respectively, such that $\phi_3 \mathcal{E}_1 \subseteq \mathcal{E}_3$ and $\phi_3 \mathcal{E}_2 \subseteq \mathcal{E}_4.$ 
We begin by considering a spanning set of $\mathrm{Hom}^-(3)$ that contains $\phi_3 \mathcal{E}_1,$ given by:
%Then we apply Gram-Schmidt orthogonalization process to get an orthonormal basis of $\mathrm{Hom}^-(3)$ containing $ \phi_3 e_1^1.$ 
%In view of this, we consider the following spanning sets:
$$\{(z_1z_2-z_3^2)e_1^1,\; (z_1z_2+z_3^2)e_1^1,\; z_1^2 e_1^1,\; z_2^2e_1^1\}.$$ Clearly, $\{\phi_3 e_1^1\}$ is an orthonormal set. 
By applying the Gram-Schmidt orthogonalization process to the above spanning set, followed by normalization, yields an orthonormal basis $\mathcal{E}_3=\{e_i^3\}_{i=1}^{d_3^-}$  of $\mathrm{Hom}^-(3)$, where $e^3_1=\phi_3 e_1^1.$ 

Similarly, consider the following spanning set of $\mathrm{Hom}^-(4)$ containing $\phi_3 \mathcal{E}_2:$ 
\beq \label{spanningset4}  \{(z_1z_2-z_3^2)e^2_i,\; (z_1z_2+z_3^2)e^2_i,\; z_1^2 e^2_i,\; z_2^2e^2_i: i=1,2\}.\eeq 
Denote $e^4_1:=\phi_3 e_1^2$, and $e^4_2:= \phi_3 e_2^2$. The set $\{e^4_1, e^4_2\}$ forms an orthonormal subset of $\mathrm{Hom}^-(4).$ We apply the Gram-Schmidt orthogonalization process to the spanning set in \eqref{spanningset4}, taking $e_1^4$ and $e_2^4$ as the initial orthonormal vectors, and subsequently normalizing, we get an orthonormal basis $\mathcal{E}_4$ of $\mathrm{Hom}^-(4)$ such that $\phi_3 \mathcal{E}_2 \subseteq \mathcal{E}_4.$

Proceeding inductively, we obtain an orthonormal basis $\mathcal{E}_n=\{e^n_i\}_{i=1}^{d_n^-}$ of $\mathrm{Hom}^-(n),$ and an orthonormal basis $\mathcal{E}_{n+2}=\{e^{n+2}_i\}_{i=1}^{d_{n+2}^-}$ of $\mathrm{Hom}^-(n+2),$ such that $e_i^{n+2}=\phi_3 e_i^n$ for $i=1,\ldots, d_n^-.$ It follows that $\phi_3\mathcal{E}_n \subseteq \mathcal{E}_{n+2}.$ This completes the proof of the lemma.

\end{proof}

Now, as an application of Lemma \ref{key lemma}, we prove the following result.
\begin{theorem}
    The only compact Toeplitz operator on $H^2(\mathbb E)$ is the zero operator.
\end{theorem}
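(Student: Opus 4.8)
The plan is to transfer the problem, as in the proof of Theorem~\ref{Brown-Halmos}, to the Hardy space $H^2_{-}(\mathfrak{R}_{\mathrm{II}})$, where the orthonormal basis constructed in Lemma~\ref{orthonormalbasis} is available. Let $T$ be a compact Toeplitz operator on $H^2(\mathbb E)$, and set $A=\Psi\,T\,\Psi^{*}$, a compact operator on $H^2_{-}(\mathfrak{R}_{\mathrm{II}})$ satisfying the relations in \eqref{brown-halmos10}; in particular $\mathcal{T}_{\phi_3}^{*}A\mathcal{T}_{\phi_3}=A$. The guiding idea, following Brown and Halmos, is that the relation $\mathcal{T}_{\phi_3}^{*}A\mathcal{T}_{\phi_3}=A$ forces a diagonal-type invariance of $A$ under the ``shift by $\phi_3$'', and compactness then kills the diagonal entries.

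First I would make precise how $\mathcal{T}_{\phi_3}$ acts on the bases $\mathcal{E}_n$. By Lemma~\ref{orthonormalbasis}, for each $n$ and each $e^{n}_i$ with $i\le d_n^{-}$ we have $\mathcal{T}_{\phi_3}e^{n}_i=P_{H^2_{-}(\mathfrak{R}_{\mathrm{II}})}(\phi_3 e^{n}_i)=\phi_3 e^{n}_i=e^{n+2}_i$, since $\phi_3\mathcal{E}_n\subseteq\mathcal{E}_{n+2}$ and $\phi_3 e^{n}_i$ already lies in $H^2_{-}(\mathfrak{R}_{\mathrm{II}})$. Thus along each fixed index $i$, the vectors $e^{i_0}_i,\,e^{i_0+2}_i,\,e^{i_0+4}_i,\dots$ form an orbit on which $\mathcal{T}_{\phi_3}$ acts as a pure isometry (an isometry because $\mathcal{T}_{\phi_3}^{*}\mathcal{T}_{\phi_3}=I$ by Lemma~\ref{lemma 1}). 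The key computation is then, for any fixed basis vector $e=e^{n}_i$,
\[
\langle A e,e\rangle=\langle \mathcal{T}_{\phi_3}^{*}A\mathcal{T}_{\phi_3}e,e\rangle=\langle A\,\mathcal{T}_{\phi_3}e,\mathcal{T}_{\phi_3}e\rangle=\langle A\,e^{n+2}_i,e^{n+2}_i\rangle,
\]
and iterating gives $\langle A e^{n}_i,e^{n}_i\rangle=\langle A e^{n+2k}_i,e^{n+2k}_i\rangle$ for all $k\ge 0$.

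Next I would exploit compactness. Since the $e^{n+2k}_i$ are mutually orthogonal unit vectors, they converge weakly to $0$ as $k\to\infty$; because $A$ is compact, $A e^{n+2k}_i\to 0$ in norm, so $\langle A e^{n+2k}_i,e^{n+2k}_i\rangle\to 0$. Combined with the constancy just established, this forces $\langle A e^{n}_i,e^{n}_i\rangle=0$ for every basis vector $e^{n}_i$. This handles the ``diagonal'' entries. To kill the off-diagonal entries I would run the same argument on inner products $\langle A e^{n}_i,e^{m}_j\rangle$ after shifting both arguments by powers of $\mathcal{T}_{\phi_3}$: the relation gives $\langle A e^{n}_i,e^{m}_j\rangle=\langle A e^{n+2k}_i,e^{m+2k}_j\rangle$ for all $k$, and compactness again sends the right-hand side to $0$. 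Since $\bigcup_n\mathcal{E}_n$ is an orthonormal basis of $H^2_{-}(\mathfrak{R}_{\mathrm{II}})$, vanishing of all matrix entries $\langle A e^{n}_i,e^{m}_j\rangle$ yields $A=0$, hence $T=\Psi^{*}A\Psi=0$.

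The main obstacle I anticipate is bookkeeping the index matching when both arguments are shifted: Lemma~\ref{orthonormalbasis} guarantees $e^{n+2}_i=\phi_3 e^{n}_i$ only for $i\le d_n^{-}$, so the shift is defined coherently on each fixed index $i$ but I must be careful that for a pair of indices $(i,j)$ coming from different degrees $n\ne m$ the simultaneous shift $e^{n+2k}_i,\,e^{m+2k}_j$ stays within the ranges $d_{n+2k}^{-},\,d_{m+2k}^{-}$. Since $d_n^{-}$ is strictly increasing in $n$, for $k$ large these index bounds are automatically satisfied, so the coherent shift structure persists and the argument goes through; the only care needed is to take $k$ past a threshold depending on $(i,j)$ before invoking the weak-null/compactness step.
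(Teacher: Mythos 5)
Your proof is correct and follows essentially the same route as the paper: transfer the compact Toeplitz operator to $H^2_{-}(\mathfrak{R}_{\mathrm{II}})$, use the basis of Lemma \ref{orthonormalbasis} together with the invariance $\mathcal{T}_{\phi_3}^{*}A\mathcal{T}_{\phi_3}=A$ to shift matrix entries via $\langle A e^{n}_i,e^{m}_j\rangle=\langle A e^{n+2k}_i,e^{m+2k}_j\rangle$, and let compactness plus weak convergence of the shifted basis vectors to zero force all entries to vanish. The index bookkeeping you flag is automatic for every $k\ge 0$ (since $d_n^{-}$ is increasing in $n$, $i\le d_n^{-}\le d_{n+2k}^{-}$), so no threshold on $k$ is actually needed.
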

\begin{proof}
  Let $\hat{u} \in L^\infty(S_\mathbb E).$ Assume that the Toeplitz operator $T_{\hat{u}}$ on $H^2(\mathbb E)$ is compact. If $u=\hat{u}\circ \phi$ 
  %then $u:S_{\mathfrak{R}_{\mathrm{II}}} \to \mathbb C$ is a $\sigma$-invariant bounded measurable function.
then it follows from \eqref{toeplitzrelation} that the Toeplitz operator $\mathcal{T}_{u}$ defined on  $H^2_{-}(\mathfrak{R}_{\mathrm{II}})$ is unitarily equivalent to $T_{\hat{u}}.$  Therefore,  $\mathcal{T}_{u}$ is compact.
 
   Let $\mathcal{E}=\bigcup_{n=1}^\infty \mathcal{E}_n$ be the orthonormal basis of $H^2_{-}(\mathfrak{R}_{\mathrm{II}}),$ where $\mathcal{E}_n= \{e^n_i: 1\leq i \leq d_n^-\}$ is the orthonormal basis of $\mathrm{Hom}^-(n)$ obtained in Lemma \ref{orthonormalbasis}. 
   Note that for any $e^n_i\in \mathcal{E}$, $\mathcal{T}_{\phi_3} e^n_i= e^{n+2}_i.$
   If $e^n_i,\; e^m_j\in \mathcal{E}$ and $r\in \mathbb N$ then by Lemma \ref{key lemma}, we have $$\inp{\mathcal{T}_u e^n_i}{e^m_j}= \inp{\mathcal{T}_{\phi_3}^{r*} \mathcal{T}_u \mathcal{T}_{\phi_3}^r e^n_i}{e^m_j}= \inp{\mathcal{T}_u e^{n+2r}_i}{e^{m+2r}_j}.$$
  Since $e_i^\ell$ converges weakly to $0$ as $\ell \to \infty$ and $\mathcal{T}_u$ is compact, we get  $$|\inp{\mathcal{T}_u e^n_i}{e^m_j}|  \leq \|\mathcal{T}_u e^{n+2r}_i\|  \to 0 \; \mbox{as } r\to \infty.$$ Therefore, $\inp{\mathcal{T}_u e^n_i}{e^m_j}=0$. Since $e^n_i,\; e^m_j$ are chosen arbitrarily from $\mathcal{E}$, $\mathcal{T}_u=0$. This shows that  $u=\hat{u}\circ \phi=0$ a.e on $S_{\mathfrak{R}_{\mathrm{II}}}.$ Consequently $\hat{u}=0$ a.e on $S_{\mathbb E}.$ This completes the proof.
\end{proof}
   
\subsection*{Acknowledgments}
The author, S. Jain, gratefully acknowledges Prof. Arup Chattopadhyay for the financial support provided through his project (Ref No. MATHSPNSERB01119xARC002).
The work of S. Kumar was supported in the form of MATRICS grant (Ref No. MTR/2022/000457) of Science and Engineering Research Board (SERB). 
Support for the work of M. K. Mal was provided in the form of a Prime Minister's Research Fellowship (PMRF / 2502827). 
Support for the work of P. Pramanick was provided by the Department of Science and Technology (DST) in the form of the Inspire Faculty Fellowship (Ref No. DST/INSPIRE/04/2023/001530).

\end{document}